\newtheorem{theorem}{Theorem}[section]
\newtheorem{lemma}[theorem]{Lemma}
\newtheorem{proposition}[theorem]{Proposition}
\newtheorem{corollary}[theorem]{Corollary}
\theoremstyle{definition}
\newtheorem{definition}[theorem]{Definition}
\newtheorem{remark}[theorem]{Remark}
\newtheorem{notation}[theorem]{Notation}
\begin{document}
\title[Remarks on Mirror Symmetry of DT theory for Calabi-Yau 4-folds]{Remarks on Mirror Symmetry of \\ Donaldson-Thomas theory for Calabi-Yau 4-folds}
\author{Yalong Cao}
\address{The Institute of Mathematical Sciences and Department of Mathematics, The Chinese University of Hong Kong, Shatin, Hong Kong}
\email{ylcao@math.cuhk.edu.hk}

\author{Naichung Conan Leung}
\address{The Institute of Mathematical Sciences and Department of Mathematics, The Chinese University of Hong Kong, Shatin, Hong Kong}
\email{leung@math.cuhk.edu.hk}

\maketitle
\begin{abstract}
Motivated by Strominger-Yau-Zaslow's mirror symmetry proposal and Kontsevich's homological mirror symmetry conjecture,
we study mirror phenomena (in A-model) of certain results from Donaldson-Thomas theory for Calabi-Yau 4-folds.
\end{abstract}

%\tableofcontents

%\newpage
\section{Introduction}
Mirror symmetry is a duality between symplectic geometry (A-model) and complex geometry (B-model) for Calabi-Yau manifolds \cite{yau}.
In the B-model, Donaldson-Thomas invariants \cite{dt, th, js, ks} count holomorphic bundles (or coherent sheaves) on Calabi-Yau 3-folds. Borisov and Joyce \cite{bj} and the authors \cite{cao, caoleung, caoleung3, caoleung2} studied their extensions to Calabi-Yau 4-folds (abbrev. $CY_{4}$).

The purpose of this note is to study certain corresponding mirror phenomena in the A-model for $CY_{4}$, mainly motivated by Strominger-Yau-Zaslow's geometric mirror symmetry proposal \cite{syz, lyz}, Kontsevich's homological mirror symmetry conjecture \cite{kont}
and Thomas' paper on $CY_{3}$ \cite{thomas}. In particular, we study calibrated geometry \cite{harveylawson} for $CY_{4}$ and point out corresponding structures in $DT_{4}$ theory (B-model). We continue Thomas' table \cite{thomas} as follows.

\[%
\begin{tabular}
[c]{|l|l|l|}\hline
$\textrm{Topological twists} %
\begin{array}
[c]{l}%
\,\\
\,
\end{array}
$ & \quad \quad \quad \quad  B-model & \quad \quad \quad \quad  A-model \\\hline
$\textrm{Calabi-Yau 4-folds} %
\begin{array}
[c]{l}%
\,\\
\,
\end{array}
$ & \quad \quad \quad \quad \quad  $\check{X}$   & \quad \quad \quad \quad \quad  $X$   \\\hline
$\textrm{Complex/ Symplectic}%
\begin{array}
[c]{l}%
\,\\
\,
\end{array}
$  & \quad $\Omega=\Omega_{\check{X}}\in H^{4,0}(\check{X})$ & \quad $\omega=\omega_{X}\in H^{1,1}(X)$  \\
 structures & \quad $\omega=\omega_{\check{X}}\in H^{1,1}(\check{X})$  & \quad $\Omega=\Omega_{X}\in H^{4,0}(X)$ \\\hline
$\textrm{Geometric objects}%
\begin{array}
[c]{l}%
\,\\
\,
\end{array}
$ & Connections on a vector bundle & Submanifolds in class $[L]\in H^{4}(X)$ \\
& \quad \quad \quad $E\rightarrow \check{X}$  & \quad  with connections on $E\rightarrow L$  \\\hline
$\textrm{Star operators}%
\begin{array}
[c]{l}%
\,\\
\,
\end{array}
$ & Choose a metric $h_{E}$ on $E$  & Choose a metric $h$ on $E$, $g_{L}=g_{X}|_{L}$   \\
& $*_{4}\triangleq(\Omega\lrcorner)\circ*_{h_{E}}\circlearrowleft\Omega^{0,\bullet}(\check{X},EndE)$ &  $\quad *_{g_{L}}\circlearrowleft \Omega^{\bullet}(L)$, \textrm{  } $*_{h}\circlearrowleft\Omega^{\bullet}(L,\mathfrak{g}_{E})$    \\\hline
$\textrm{Energy functionals}%
\begin{array}
[c]{l}%
\,\\
\,
\end{array}
$ & \quad \quad $\int_{\check{X}}|F^{0,2}|_{h_{E}}^{2}dvol$ & \quad \quad $\int_{L}(|F|_{h}^{2}+|\omega|_{L}|^{2})dvol_{g_{L}}$  \\\hline
$\textrm{Energy minimizers}%
\begin{array}
[c]{l}%
\,\\
\,
\end{array}
$ & \quad $F^{0,2}+*_{4}F^{0,2}=0$ & \quad $\omega|_{L}+*_{g_{L}}(\omega|_{L})=0$, \textrm{  }$F^{+}=0$ \\
& Complex ASD connections &  ASD submfds with ASD bundles  \\\hline

$ \textrm{Reductions}%
\begin{array}
[c]{l}%
\,\\
\,
\end{array}
$ &  If $ch_{2}(E)\in\textrm{Ker}(\wedge[\Omega]) \cap H^{4}(\check{X})$ & \quad If $[L]\in\textrm{Ker}(\wedge[\omega^{2}])\cap H^{4}(X)$   \\
& \quad $F^{0,2}_{+}=0\Rightarrow F^{0,2}=0$ & \quad \quad $(\omega|_{L})^{+}=0\Rightarrow \omega|_{L}=0$  \\\hline
$\textrm{Moment maps}%
\begin{array}
[c]{l}%
\,\\
\,
\end{array}
$ & \quad\quad\quad\quad $F\wedge \omega^{3}$  &  \quad \quad\quad \quad Im$(\Omega)|_{L}$ \\\hline
%$\textrm{Geometric flows}%
%\begin{array}
%[c]{l}%
%\,\\
%\,
%\end{array}
%$ & \quad\quad\quad Yang-Mills flow   &  \quad \quad \quad Maslov flow \\\hline
\end{tabular}
\]
\\

The ASD submanifolds mentioned in the above table (see section 2) are corresponding mirror objects of complex ASD connections on $CY_{4}$.
To continue the discussion, let us first fix the following notation.
\begin{notation}\label{notation}
Unless specified otherwise, we denote \\
(1) $X$ to be a Calabi-Yau 4-fold (compact or convex at infinity with $c_{1}(X)=0$); \\
(2) $L$ to be a compact relatively spin Lagrangian submanifold in $X$ with zero Maslov index.
\end{notation}
In the definition of $DT_{4}$ invariants (B-model), Brav-Bussi-Joyce's local Darboux theorem \cite{bbj} (see Theorem \ref{B-side local Darboux thm}) for moduli spaces of simple sheaves on $CY_{4}$ is an important ingredient, which says for any simple sheaf $\mathcal{F}$, we could choose a local Kuranishi map
\begin{equation}\kappa: Ext^{1}(\mathcal{F},\mathcal{F})\rightarrow Ext^{2}(\mathcal{F},\mathcal{F})  \nonumber \end{equation}
such that
\begin{equation}\int_{X}Tr(\kappa\cup\kappa)\cup\Omega_{X}=0. \nonumber \end{equation}
We are interested in the corresponding mirror result in the A-model.
In fact, the analog of the above Kuranishi map in A-model is
\begin{equation}\kappa: H^{1}(L;\Lambda_{0,nov}^{+})\rightarrow H^{2}(L;\Lambda_{0,nov}^{+}),  \nonumber \end{equation}
\begin{equation}\kappa(x)\triangleq\sum_{k=0}^{\infty}m_{k}(x^{\otimes k}),  \nonumber \end{equation}
where $\{m_{k}\}_{k\geq0}$ is the $A_{\infty}$-algebra structure on $H^{*}(L;\Lambda_{0,nov}^{+})$ defined by Fukaya \cite{fukaya}.
\begin{theorem}\label{thm 1}(Theorem \ref{A-side local Darboux thm}) \\
Let $L\subseteq X$ be a Lagrangian submanifold in a $CY_{4}$. Then
\begin{equation}Q(\kappa,\kappa)=\textrm{const}, \nonumber\end{equation}
where $Q$ is the Poincar\'{e} pairing on $H^{2}(L;\Lambda_{0,nov}^{+})$.
\end{theorem}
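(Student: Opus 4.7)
The plan is to prove $Q(\kappa(x),\kappa(x))$ is independent of $x\in H^1(L;\Lambda_{0,nov}^+)$ by showing its first variation vanishes; the constant value is then $Q(\kappa(0),\kappa(0))=Q(m_0,m_0)$. The two inputs I will use are: (i) the cyclic symmetry of the Fukaya $A_\infty$-operations $\{m_k\}$ on $H^*(L;\Lambda_{0,nov})$ with respect to the Poincar\'e pairing $Q$, available for a compact relatively spin Maslov-zero Lagrangian in a Calabi-Yau, after appropriate choices of perturbations; and (ii) the $A_\infty$-relations in coderivation form $\hat m^2=0$ on the bar complex $\bigoplus_{n\geq 0} H^*(L)^{\otimes n}$.

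For $v\in H^1(L;\Lambda_{0,nov}^+)$ the directional derivative is
\[
 d_v\kappa(x)=\sum_{k\geq 1}\sum_{i=0}^{k-1} m_k\bigl(x^{\otimes i},v,x^{\otimes(k-1-i)}\bigr),
\]
and the symmetry of the middle-degree intersection pairing on the $4$-manifold $L$ gives
\[
 d_v Q(\kappa(x),\kappa(x))=2\sum_{k,i} Q\bigl(\kappa(x),\,m_k(x^{\otimes i},v,x^{\otimes(k-1-i)})\bigr).
\]
Next I apply cyclic symmetry to each summand to rotate $v$ to the outer slot of $Q$. The degree count works out because $\kappa(x)$ has even degree while each inserted $x$ has odd degree, so the Koszul signs combine consistently across the sum and one arrives at
\[
 d_v Q(\kappa(x),\kappa(x))=\pm\,2\,Q\!\left(v,\;\sum_{k\geq 1}\sum_{i=0}^{k-1} m_k\bigl(x^{\otimes(k-1-i)},\kappa(x),x^{\otimes i}\bigr)\right).
\]

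The inner sum is the length-one projection of $\hat m^2(e^x)$, where $e^x=\sum_n x^{\otimes n}$. Indeed, a direct computation shows that the length-$N$ component of $\hat m(e^x)$ is $\sum_{i+j=N-1} x^{\otimes i}\otimes \kappa(x)\otimes x^{\otimes j}$; applying $\hat m$ once more and projecting to length one then produces exactly the displayed sum. Since $\hat m^2=0$ (the $A_\infty$-relations, including the curvature $m_0$), this sum vanishes, so $d_v Q(\kappa,\kappa)=0$ for every $v\in H^1(L;\Lambda_{0,nov}^+)$. Hence $Q(\kappa,\kappa)$ is constant.

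The step I expect to be most delicate is ingredient (i), the cyclic symmetry of the $A_\infty$-structure on $H^*(L;\Lambda_{0,nov}^+)$ with respect to $Q$. At the chain level it reflects the cyclic symmetry of the pseudo-holomorphic disk moduli (with one distinguished boundary marked point) combined with Poincar\'e duality; descending to the minimal model on cohomology requires a compatible homotopy transfer and careful sign bookkeeping, which are implicit in Fukaya's framework \cite{fukaya}. Once this geometric input is secured, the remainder of the argument is a purely formal calculation inside the cyclic $A_\infty$-algebra, entirely parallel to the B-side statement from \cite{bbj}.
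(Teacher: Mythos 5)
Your proposal is correct and is essentially the paper's own argument: both rest on Fukaya's cyclic $A_{\infty}$-structure on $H^{*}(L;\Lambda_{0,nov}^{+})$ (Theorem \ref{cyclic A inf str}) together with pairing the $A_{\infty}$-relations against a degree-one input and using the cyclic symmetry of $Q$ to rotate that input out of the pairing. Your first-variation/coderivation computation is just the polarized form of Lemma \ref{lemma on cyclic str}, which instead fixes the total number of $x$'s and pairs the $A_{\infty}$-relation against $x$ itself, arriving at the same conclusion $Q(\kappa,\kappa)=Q(m_{0}(1),m_{0}(1))$.
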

This theorem follows from a combination of a general result for cyclic $A_{\infty}$-algebras (see Lemma \ref{lemma on cyclic str}) and the existence of a cyclic $A_{\infty}$-structure on $H^{*}(L;\Lambda_{0,nov}^{+})$ \cite{fukaya}.

If $L$ is an \emph{unobstructed} Lagrangian, i.e. there exists $b\in H^{1}(L;\Lambda_{0,nov}^{+})$ such that $\kappa(b)=0$,
one can define the twisted $A_{\infty}$-algebra $(H^{*}(L;\Lambda_{0,nov}^{+}),m_{k}^{b})$ with
\begin{equation}m_{k}^{b}(x_{1},\cdot\cdot\cdot,x_{k})=
\sum_{n\geq k}\sum m_{n}(b,\cdot\cdot\cdot,b,x_{1},b,\cdot\cdot\cdot,b,\cdot\cdot\cdot,x_{k},b,\cdot\cdot\cdot,b),  \nonumber \end{equation}
where the first summation is taken over all such expressions.
The corresponding Kuranishi map
\begin{equation}\kappa^{b}:H^{1}(L;\Lambda_{0,nov}^{+})\rightarrow H^{2}(L;\Lambda_{0,nov}^{+}), \quad \kappa^{b}(x)=\sum_{k=0}^{\infty}m^{b}_{k}(x^{\otimes k})  \nonumber \end{equation}
is similarly defined.

$(H^{*}(L;\Lambda_{0,nov}^{+}),m_{k}^{b})$ is a cyclic $A_{\infty}$-algebra with $m_{0}^{b}(1)=0$ provided $(H^{*}(L;\Lambda_{0,nov}^{+}),m_{k})$ is a cyclic $A_{\infty}$-algebra (see also \cite{fukaya}). As a corollary of the above theorem, we get an unobstructedness result for moduli spaces of Maurer-Cartan elements, i.e. if the space of bounding cochains $b$'s is nonempty, then it is the whole $H^{1}(L;\Lambda_{0,nov}^{+})$.
\begin{theorem}(Theorem \ref{cor on kuranishi map}) \\
Let $L\subseteq X$ be a definite \footnote{i.e. the intersection form on $H^{2}(L,\mathbb{R})$ is definite.} and unobstructed Lagrangian submanifold in a $CY_{4}$. Then \\
(1) $\kappa\equiv0$;
(2) for any $b\in H^{1}(L;\Lambda_{0,nov}^{+})$, $\kappa^{b}\equiv0$.
\end{theorem}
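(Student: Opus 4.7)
The plan is to reduce both claims to Theorem \ref{thm 1} (the A-side local Darboux theorem) together with a short valuation argument exploiting the definiteness hypothesis.

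For (1), I would first invoke Theorem \ref{thm 1} to conclude that the function $x\mapsto Q(\kappa(x),\kappa(x))$ is constant on $H^1(L;\Lambda_{0,nov}^+)$. Since $L$ is unobstructed, there exists $b$ with $\kappa(b)=0$, so evaluating at $x=b$ forces this constant to be $Q(0,0)=0$. Hence $Q(\kappa(x),\kappa(x))=0$ for every $x$. To upgrade this identity to $\kappa(x)=0$, I would show that the Poincar\'e pairing on $H^2(L;\Lambda_{0,nov}^+)$ is anisotropic whenever $Q$ is definite on $H^2(L;\mathbb{R})$. Writing any nonzero $\alpha\in H^2(L;\Lambda_{0,nov}^+)$ as $\alpha=\sum_{k\geq 1}T^{\mu_k}\beta_k$ with strictly increasing exponents $0<\mu_1<\mu_2<\cdots$ and $\beta_1\neq 0$, the leading coefficient of $Q(\alpha,\alpha)$ is $T^{2\mu_1}Q(\beta_1,\beta_1)$, which is nonzero by definiteness; every remaining summand $T^{\mu_i+\mu_j}Q(\beta_i,\beta_j)$ has strictly higher $T$-exponent, so no cancellation is possible. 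Applying this to $\alpha=\kappa(x)$ completes (1).

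For (2), I would repeat the argument with the deformed $A_\infty$-algebra $(H^*(L;\Lambda_{0,nov}^+),m_k^b)$, whose cyclicity is recorded just before the theorem statement. Theorem \ref{thm 1} then yields $Q(\kappa^b(x),\kappa^b(x))=\textrm{const}$. Part (1) gives $\kappa\equiv 0$, so every $b\in H^1(L;\Lambda_{0,nov}^+)$ is a bounding cochain, and in particular $\kappa^b(0)=m_0^b(1)=\sum_{n\geq 0}m_n(b,\ldots,b)=\kappa(b)=0$, which pins the constant at $0$. The same anisotropy argument as above then forces $\kappa^b\equiv 0$.

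The only nontrivial step is the Novikov-coefficient anisotropy used to pass from $Q(\alpha,\alpha)=0$ to $\alpha=0$; once one notices that the exponents in $\Lambda_{0,nov}^+$ are well-ordered and discrete, this is an immediate consequence of the definiteness of the classical intersection form. All the genuine geometry is already encoded in Theorem \ref{thm 1}, and (2) becomes a cosmetic variant of (1) once one verifies that $m_k\mapsto m_k^b$ preserves both cyclicity and the identity $\kappa^b(0)=\kappa(b)$.
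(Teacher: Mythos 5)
Your argument is correct and is essentially the paper's own proof: constancy of $Q(\kappa,\kappa)$ from Theorem \ref{thm 1} (i.e. Lemma \ref{lemma on cyclic str}), evaluation at a bounding cochain to pin the constant at zero, definiteness to conclude $\kappa\equiv 0$, and for (2) the same reasoning applied to the cyclic twisted algebra $(H^{*}(L;\Lambda_{0,nov}^{+}),m_{k}^{b})$ using $m_{0}^{b}(1)=\kappa(b)=0$. The only difference is that you make explicit, via the leading-$T$-exponent argument, why definiteness of the intersection form on $H^{2}(L,\mathbb{R})$ forces anisotropy of $Q$ extended over $\Lambda_{0,nov}^{+}$, a step the paper uses without comment.
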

${}$ \\
The outline of this note is as follows: In section 2, we introduce the Harvey-Lawson (anti)-self-dual submanifolds in $CY_{4}$ and study their basic properties. We also discuss an orientability problem for moduli spaces of special Lagrangian submanifolds. In section 3, we study FOOO's Lagrangian Floer theory on $CY_{4}$ and point out corresponding structures in the B-side. In the final section, we recall basic facts
in $DT_{4}$ theory (B-side story). \\
${}$ \\
\textbf{Acknowledgement}: The first author would like to thank Garrett Alston, Yin Li and Junwu Tu for many helpful discussions. Special thanks to Alston for teaching him Lagrangian Floer theory. He also expresses his gratitude to Simon Donaldson for pointing out orientability problems in calibrated geometry during a visit to the Simons Center. The work of the second author was substantially supported by grants from the Research Grants Council of the Hong Kong Special Administrative Region, China (Project No. CUHK401411 and CUHK14302714).

\section{Mirror aspects of $DT_{4}$ theory in calibrated geometry}

\subsection{Harvey-Lawson (anti-)self-duals in eight manifolds}
We recall that the mirror of holomorphic bundles (resp. HYM bundles) on Calabi-Yau manifolds are Lagrangian submanifolds (resp. special Lagrangian submanifolds) coupled with flat bundles. In complex 4-dimension, under SYZ mirror symmetry \cite{syz} \cite{lyz}, solutions to
the $DT_{4}$ equations
\begin{equation} \left\{ \begin{array}{l}
  F^{0,2}_{+}=0 \\ F\wedge\omega^{3}=0 ,    % uses matrix to express the cpx ASD equations
\end{array}\right.
\nonumber \end{equation}
become special Harvey-Lawson ASD submanifolds coupled with ASD bundles as described below.
\begin{definition}\label{HL asd submfd}
Given an almost Hermitian eight manifold\footnote{It is an almost complex manifold with a Hermitian metric.} $(X,g,J,\omega)$, an oriented four-dimensional submanifold $L$ is a Harvey-Lawson
anti-self-dual submanifold if
\begin{equation}(\omega|_{L})^{+}\triangleq\frac{1}{2}(\omega|_{L}+*(\omega|_{L}))=0\in\Omega^{2}_{+}(L),
%\quad (\textrm{resp}.\textrm{ } (\omega|_{L})^{+}\triangleq\frac{1}{2}(\omega|_{L}+*(\omega|_{L}))=0),
\nonumber \end{equation}
where $*$ is the Hodge-star operator on $L$ for the induced metric $g|_{L}$.

When $(X,g,J,\omega)$ is a $CY_{4}$ with holomorphic volume form $\Omega$, a Harvey-Lawson ASD submanifold $L$ is special if it
satisfies
\begin{equation}Im(\Omega)|_{L}=0. \nonumber \end{equation}
\end{definition}
\begin{remark}
This notion was introduced by Harvey-Lawson \cite{harveylawson} for submanifolds in $\mathbb{C}^{4}$ . They also showed
special ASD submanifolds are exactly the same as Cayley submanifolds with respect to the Cayley 4-form $Re(\Omega)-\frac{1}{2}\omega^{2}$.
\end{remark}
%\begin{lemma}\label{energy bound}
%Given an oriented four-dimensional Riemannian manifold $(L,g_{L})$ and a $2$-form $\omega|_{L}\in \Omega^{2}(L)$, we have
%\begin{equation}(\omega|_{L})^{2}=(|(\omega|_{L})^{+}|^{2}-|(\omega|_{L})^{-}|^{2})dvol_{L}, \nonumber \end{equation}
%where $dvol_{L}$ is the volume form of $g_{L}$.
%\end{lemma}
%\begin{proof}
%It is a standard argument, see for instance \cite{dk}.
%\end{proof}
If $d\omega=0$, i.e. $X$ is almost K\"{a}hler, Lagrangian submanifolds are Harvey-Lawson ASD's. A converse statement is given by
\begin{proposition}
Let $(X,g,J,\omega)$ be an almost K\"{a}hler eight manifold, $L$ be a closed Harvey-Lawson ASD submanifold such that $[(\omega|_{L})^{2}]=0\in H^{4}(L)$. Then $L$ is a Lagrangian submanifold.
\end{proposition}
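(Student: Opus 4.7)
The plan is to exploit the anti-self-duality of $\omega|_L$ together with the cohomological vanishing hypothesis to force pointwise vanishing of $\omega|_L$ on $L$. The main input is the standard identity on an oriented Riemannian $4$-manifold that for an anti-self-dual $2$-form $\alpha$,
\begin{equation}
\alpha \wedge \alpha \;=\; -\,\alpha \wedge *\alpha \;=\; -|\alpha|^{2}\,dvol. \nonumber
\end{equation}

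First I would observe that since $X$ is almost K\"ahler we have $d\omega=0$, hence $d(\omega|_{L}) = (d\omega)|_{L} = 0$, so $\omega|_{L}$ is a closed $2$-form on $L$ and $[\omega|_{L}] \in H^{2}(L,\mathbb{R})$. In particular, the square $[(\omega|_{L})^{2}] \in H^{4}(L,\mathbb{R})$ is a well-defined class, and the hypothesis $[(\omega|_{L})^{2}] = 0$ combined with Stokes' theorem gives
\begin{equation}
\int_{L} \omega|_{L} \wedge \omega|_{L} \;=\; 0. \nonumber
\end{equation}

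Next I would apply the ASD condition $(\omega|_{L})^{+} = 0$, which is equivalent to $*(\omega|_{L}) = -\,\omega|_{L}$ on $L$. Using the displayed identity above with $\alpha = \omega|_{L}$, this yields
\begin{equation}
\int_{L} \omega|_{L} \wedge \omega|_{L} \;=\; -\int_{L} |\omega|_{L}|^{2}\,dvol_{g|_{L}}. \nonumber
\end{equation}
Combining the two integral identities forces $\int_{L} |\omega|_{L}|^{2}\,dvol = 0$, and since the integrand is nonnegative and continuous, $\omega|_{L} \equiv 0$, i.e.\ $L$ is Lagrangian.

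There is essentially no obstacle here; the proof is a two-line energy argument once one recognizes that the almost K\"ahler hypothesis is exactly what is needed to make $\omega|_{L}$ a well-defined cohomology class so that the topological assumption $[(\omega|_{L})^{2}] = 0$ can be converted into an integral identity. The only point to be careful about is orientations and the sign in the ASD identity; I would verify this by writing $\omega|_{L} = \omega|_{L}^{+} + \omega|_{L}^{-}$ so that $\int_L \omega|_{L} \wedge \omega|_{L} = \int_L (|\omega|_{L}^{+}|^{2} - |\omega|_{L}^{-}|^{2})\,dvol$, which under $\omega|_{L}^{+} = 0$ reduces to the desired nonpositive integral.
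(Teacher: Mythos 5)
Your proof is correct and follows essentially the same energy argument as the paper: the hypothesis $[(\omega|_{L})^{2}]=0$ plus Stokes gives $\int_{L}(\omega|_{L})^{2}=0$, and the pointwise identity $(\omega|_{L})^{2}=(|(\omega|_{L})^{+}|^{2}-|(\omega|_{L})^{-}|^{2})\,dvol_{L}$ together with $(\omega|_{L})^{+}=0$ forces $\omega|_{L}\equiv0$. The only cosmetic difference is that you phrase the ASD input as $*(\omega|_{L})=-\omega|_{L}$ directly, whereas the paper passes through the self-dual/anti-self-dual norm decomposition and the energy identity; the content is identical.
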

\begin{proof}
By \cite{dk}, we have an identity
\begin{equation}(\omega|_{L})^{2}=(|(\omega|_{L})^{+}|^{2}-|(\omega|_{L})^{-}|^{2})dvol_{L}. \nonumber \end{equation}
From the Stokes theorem and $[(\omega|_{L})^{2}]=0\in H^{4}(L)$, we obtain
\begin{equation}0=\int_{L}(\omega|_{L})^{2}=-\int_{L}|(\omega|_{L})^{-}|^{2}dvol_{L}. \nonumber \end{equation}
Finally, we use the energy identity
\begin{equation}\int_{L}|(\omega|_{L})|^{2}=\int_{L}(|(\omega|_{L})^{+}|^{2}+|(\omega|_{L})^{-}|^{2})dvol_{L} \nonumber \end{equation}
to get the conclusion.
\end{proof}
\begin{remark}
Any Harvey-Lawson ASD with $b_{2}=0$ is a Lagrangian submanifold.
\end{remark}
\begin{remark}
Besides Lagrangian submanifolds, half-dimensional almost K\"{a}hler submanifolds $L$'s are also examples of Harvey-Lawson self-dual's,
because $\omega|_{L}$ is the almost K\"{a}hler form of $L$ which is a self-dual two form on $(L,g|_{L})$ \cite{dk}.
This shows Harvey-Lawson ASD's could have obstructed deformations in general.
\end{remark}
\begin{remark}(Harvey-Lawson ASD's under geometric flows)  \\
Lagrangian submanifolds in K\"ahler-Einstein manifolds (e.g. Calabi-Yau manifolds) are preserved under the mean curvature flow whose stationary points are minimal Lagrangians (they are special Lagrangians in the Calabi-Yau case). For general K\"ahler manifolds, one need to couple the K\"ahler-Ricci flow with the mean curvature flow to preserve the Lagrangian condition \cite{smoczyk}.

Lotay and Pacini \cite{lotay} extended the above result to totally real submanifolds in almost K\"{a}hler manifolds by coupling the symplectic curvature flow \cite{stian} (a generalization of K\"ahler-Ricci flow) with the Maslov flow (a generalization of MCF).

In a K\"ahler-Einstein manifold, Maslov flow preserves the pull-back of the K\"{a}hler form to any totally real submanifold \footnote{Totally real is an open condition in the Grassmannian of all $4$-planes inside $\mathbb{C}^{4}$.}. If we use a fixed metric instead of the induced metric, the flow preserves totally real Harvey-Lawson ASD's.
\end{remark}

%\begin{definition}
%An oriented submanifold $L$ of an almost complex manifold $(X^{2n},J)$ is totally real if for each $p\in L$, we have $J(T_{p}L)\cap T_{p}L=\{0\}$, i.e. $T_{p}X=T_{p}L\otimes_{\mathbb{R}}\mathbb{C}$.
%\end{definition}
%The totally real condition determines a canonical isomorphism $K_{X}|_{L}\cong \Omega^{n}(L,\mathbb{C})$, where $K_{X}$ is the canonical bundle of 5$(X,J)$.
%\begin{definition}(\cite{lotay})
%Let $L$ be an oriented totally real submanifold of an almost K\"ahler manifold $(X^{2n},g,J,\omega)$. The Maslov $1$-form %$\xi_{J}\in\Omega^{1}(L,\mathbb{C})$ is
%defined by
%\begin{equation}\nabla\Omega_{L}=i\xi_{J}\otimes\Omega_{L},  \nonumber\end{equation}
%where $\nabla$ is the Chern-connection and $\Omega_{L}$ is the canonical unit section of $K_{X}|_{L}\cong \Omega^{n}(L,\mathbb{C})$.
%
%The Maslov flow for a family of immersions $\iota_{t}: L\rightarrow X$ is given by
%\begin{equation}\frac{\partial}{\partial t}\iota_{t}=-J(\iota_{t*}\xi_{J}^{\sharp}),  \nonumber\end{equation}
%where $\xi_{J}^{\sharp}$ is the vector field dual to $\xi_{J}$ with respect to the induced metric $\iota_{t}^{*}g$.
%\end{definition}

\subsection{Orientations for moduli spaces of special Lagrangians in Calabi-Yau manifolds}
In this section, we study the mirror of the orientability result for moduli spaces of sheaves on Calabi-Yau manifolds \cite{caoleung3}. We first recall the moment map approach to the moduli space of (special-)Lagrangians in Calabi-Yau manifolds, which is the beautiful work of Donaldson \cite{d10} and Hitchin \cite{hitchin2}.

Let $L$ be a closed $n$-manifold with a fixed volume form $dvol_{L}$, and $X$ be a Calabi-Yau $n$-fold with a K\"{a}hler form $\omega$ and a holomorphic volume form $\Omega$. We consider the space $\textrm{Map}_{0}(L,X)$
of smooth maps $f$'s with $f^{*}[\omega]=0\in H^{2}(L)$, and a symplectic form $\varphi$ on it defined by
\begin{equation}\varphi|_{(f)}: \Omega^{0}(L,f^{*}TX)\otimes\Omega^{0}(L,f^{*}TX)\rightarrow \mathbb{R},   \nonumber \end{equation}
\begin{equation}\varphi|_{(f)}(v_{1},v_{2})=\int_{L}\omega(v_{1},v_{2})dvol_{L}.  \nonumber \end{equation}
The group $\mathcal{G}=\textrm{Diff}_{dvol_{L}}(L)$ of volume-preserving diffeomorphisms acts on $\textrm{Map}_{0}(L,X)$ preserving the symplectic form $\varphi$. The zero loci of the corresponding moment map consists precisely of those maps $f$'s with $f^{*}(\omega)=0$.

The complex structure on $X$ induces a complex structure on $\textrm{Map}_{0}(L,X)$, and the subspace
\begin{equation}S=\{f\in\textrm{Map}_{0}(L,X)\textrm{ } | \textrm{ } f^{*}(\Omega)=dvol_{L} \}   \nonumber \end{equation}
is a complex submanifold of $\textrm{Map}_{0}(L,X)$ consisting of immersions.
We take the subgroup $\mathcal{G}_{0}\subseteq\mathcal{G}$ to be the kernel of the Calabi map \cite{d10}, \cite{banyaga}. The symplectic quotient $\mathcal{M}^{c}\triangleq S//\mathcal{G}_{0}$ is a Lagrangian torus bundle (with fiber $H_{1}(L,\mathbb{R})/H_{1}(L,\mathbb{Z})$) over the moduli space $\mathcal{M}$ of (immersed) special Lagrangian submanifolds. $\mathcal{M}$ has an integral affine structure by the Arnold-Liouville theorem.

%The tangent space of $\mathcal{M}$ at $f$ is canonically (depending only on the induced metric on $f(L)\subseteq X$) identified with $H^{1}(L,\mathbb{R})$ (we take $L\subseteq X$ to be its image under $f$) \cite{hitchin2}. The determinant lines $det(H^{odd}(L,\mathbb{C}))\otimes(det(H^{even}(L,\mathbb{C})))^{-1}$'s form a complex line bundle $\mathcal{L}\rightarrow\mathcal{M}$.
We define vector bundles $E^{*}=(S\times H^{*}(L,\mathbb{C}))//\mathcal{G}_{0}$ over $\mathcal{M}^{c}$, and form the determinant complex line bundle
$\mathcal{L}=det(E^{*})\rightarrow \mathcal{M}^{c}$.
%We have the following mirror result of Theorem \ref{existence of ori for all dim}.
\begin{proposition}\label{exi of ori on lag} ${}$ \\
(1) if $n$ is even, $c_{1}(\mathcal{L})=0$ provided that $H_{1}(\mathcal{M}^{c},\mathbb{Z})$ has no 2-torsion elements, \\
(2) if $n$ is odd, $\mathcal{L}$ has a square root.
\end{proposition}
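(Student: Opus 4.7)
The plan is to exploit Poincar\'e duality on $L$ to pair off the summands of the graded bundle $E^{\ast}$ and read off $\mathcal{L}$ directly.

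First I would decompose $E^{\ast} = \bigoplus_{i=0}^{n} E^{i}$ with $E^{i} = (S \times H^{i}(L,\mathbb{C}))//\mathcal{G}_{0}$, so that
\[
\mathcal{L} = \det(E^{\ast}) = \bigotimes_{i=0}^{n}(\det E^{i})^{(-1)^{i}}.
\]
Next I would note that the Poincar\'e pairing $H^{i}(L,\mathbb{C}) \otimes H^{n-i}(L,\mathbb{C}) \to \mathbb{C}$ given by $(\alpha,\beta) \mapsto \int_{L}\alpha\wedge\beta$ is invariant under any volume-preserving diffeomorphism of $L$, hence under $\mathcal{G}_{0} \subseteq \mathcal{G} = \textrm{Diff}_{dvol_{L}}(L)$. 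It thus descends to a fiberwise perfect pairing $E^{i} \otimes E^{n-i} \to \mathcal{O}_{\mathcal{M}^{c}}$, i.e.\ $E^{n-i} \cong (E^{i})^{\ast}$, and in particular $\det(E^{i}) \otimes \det(E^{n-i}) \cong \mathcal{O}$.

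I would then collect the factors in the product for $\mathcal{L}$ by grouping $i$ with $n-i$. If $n$ is even, $(-1)^{n-i} = (-1)^{i}$ and each pair with $i \neq n/2$ contributes $(\det E^{i} \otimes \det E^{n-i})^{(-1)^{i}} \cong \mathcal{O}$, while the middle term $(\det E^{n/2})^{(-1)^{n/2}}$ squares to $\mathcal{O}$ because $E^{n/2}$ is self-dual. Therefore $\mathcal{L}^{\otimes 2} \cong \mathcal{O}$, so $2c_{1}(\mathcal{L}) = 0$ in $H^{2}(\mathcal{M}^{c},\mathbb{Z})$. Under the hypothesis that $H_{1}(\mathcal{M}^{c},\mathbb{Z})$ has no $2$-torsion, the universal coefficient theorem ensures the same for $H^{2}(\mathcal{M}^{c},\mathbb{Z})$, forcing $c_{1}(\mathcal{L}) = 0$. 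If $n$ is odd, then $(-1)^{n-i} = -(-1)^{i}$, and the same grouping produces
\[
\mathcal{L} \cong \Big(\bigotimes_{i=0}^{(n-1)/2}(\det E^{i})^{(-1)^{i}}\Big)^{\otimes 2},
\]
an explicit square root of $\mathcal{L}$.

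The only nontrivial point, which I would verify carefully rather than regard as a serious obstacle, is the descent of Poincar\'e duality to the symplectic quotient: the $\mathcal{G}_{0}$-action on $H^{\ast}(L,\mathbb{C})$ is by pullback, which preserves $\int_{L}\alpha\wedge\beta$ since integration of top forms is a topological (in particular diffeomorphism) invariant. Once this is in hand, the rest is bookkeeping in signs and multiplicativity of determinants.
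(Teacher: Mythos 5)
Your argument is correct and is essentially the paper's proof: in both, the $\mathcal{G}_{0}$-invariance of the Poincar\'e pairing identifies $E^{n-i}\cong (E^{i})^{*}$, which for even $n$ gives $\mathcal{L}\cong\mathcal{L}^{*}$ (so $2c_{1}(\mathcal{L})=0$, and the no-2-torsion hypothesis, via universal coefficients, kills $c_{1}(\mathcal{L})$), while for odd $n$ the pairing of even with odd degrees exhibits an explicit square root of $\mathcal{L}$ (your $\bigotimes_{i\leq (n-1)/2}(\det E^{i})^{(-1)^{i}}$ agrees with the paper's $\det(E^{odd})$ up to the duality identifications and sign conventions for $\det(E^{*})$). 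You merely spell out the degree-by-degree bookkeeping and the universal coefficient step that the paper leaves implicit.
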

\begin{proof}
(1) As $\mathcal{G}_{0}$ preserves the volume form on $L$, the Poincar\'{e} pairing on $H^{*}(L,\mathbb{C})$ induces an isomorphism $\mathcal{L}\cong \mathcal{L}^{*}$ between complex line bundles when $n$ is even. Since $H^{2}(\mathcal{M}^{c},\mathbb{Z})$ has no 2-torsion elements, $2c_{1}(\mathcal{L})=0\Rightarrow c_{1}(\mathcal{L})=0$.

(2) If $n$ is odd, $det(E^{odd})$ gives a square root of $\mathcal{L}$ by Poincar\'{e} duality.
\end{proof}

\section{Mirror aspects of $DT_{4}$ theory in Lagrangian Floer theory}

\subsection{Mirror results}
In this section, we study mirror phenomena of $DT_{4}$ theory from the perspective of Lagrangian Floer theory.
Lagrangian Floer cohomology $HF^{*}(L)$, introduced by Fukaya, Oh, Ohta and Ono \cite{fooo}, is defined in terms of counting holomorphic disks
bounding Lagrangian submanifold $L$.
Given a Calabi-Yau mirror pair $(X,\check{X})$, there should exist a correspondence
\begin{equation}Ext^{*}_{\check{X}}(\mathcal{F},\mathcal{F})\leftrightarrow HF^{*}(L)   \nonumber\end{equation}
under mirror symmetry \cite{kont}. In good cases, $HF^{*}(L)\cong H^{*}(L)$ and Serre duality pairing in the B-model would be mirror to the Poincar\'{e} pairing in the A-model.
On $CY_{3}$, moduli spaces of $\mathcal{F}$ (resp. $(L,b)$\footnote{$b$ is a bounding cochain which helps to define $HF^{*}(L)$ (see Fukaya \cite{fukaya1}).}) are locally critical points of holomorphic functions \cite{bbj}, \cite{js} (resp. \cite{fukaya1}).
On $CY_{4}$, we have local 'Darboux models' for moduli spaces of simple sheaves (Theorem \ref{B-side local Darboux thm}), we expect
a similar structure in the A-model.

To state the result, we first introduce the Kuranishi map
\begin{equation}\kappa: H^{1}(L;\Lambda_{0,nov}^{+})\rightarrow H^{2}(L;\Lambda_{0,nov}^{+}),  \nonumber \end{equation}
\begin{equation}\kappa(x)\triangleq\sum_{k=0}^{\infty}m_{k}(x^{\otimes k}),  \nonumber \end{equation}
where $\{m_{k}\}_{k\geq0}$ is the $A_{\infty}$-algebra structure on $H^{*}(L;\Lambda_{0,nov}^{+})$ defined by Fukaya \cite{fukaya}.
%\begin{equation}\kappa: A^{1}\rightarrow A^{2}, \quad \kappa(x)=\sum_{k=0}^{\infty}m_{k}(x^{\otimes k}).  \nonumber\end{equation}
\begin{theorem}\label{A-side local Darboux thm}
Let $L\subseteq X$ be a Lagrangian submanifold in a $CY_{4}$. Then
\begin{equation}Q(\kappa,\kappa)=\textrm{const}, \nonumber\end{equation}
where $Q$ is the Poincar\'{e} pairing on $H^{2}(L;\Lambda_{0,nov}^{+})$.
\end{theorem}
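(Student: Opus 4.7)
The plan is to deduce Theorem \ref{A-side local Darboux thm} as a formal consequence of two inputs: Fukaya's construction of a cyclic $A_{\infty}$-structure on $H^{*}(L;\Lambda_{0,nov}^{+})$ with respect to the Poincaré pairing $Q$, and a purely algebraic lemma (Lemma \ref{lemma on cyclic str}) valid for any cyclic $A_{\infty}$-algebra. The algebraic lemma should state that for any cyclic $A_{\infty}$-algebra $(A^{*},\{m_{k}\}_{k\geq 0},Q)$, the formal expression
$$Q\bigl(\kappa(x),\kappa(x)\bigr),\qquad \kappa(x)=\sum_{k\geq 0}m_{k}(x^{\otimes k}),$$
is independent of $x\in A^{1}$. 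The theorem then follows immediately by applying this lemma with $A^{*}=H^{*}(L;\Lambda_{0,nov}^{+})$.

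To prove the lemma, I would show that the formal derivative of $Q(\kappa,\kappa)$ in any direction $\xi\in A^{1}$ vanishes. Differentiating at $x_{0}$ and using symmetry of $Q$ gives
$$\left.\frac{d}{dt}\right|_{t=0} Q\bigl(\kappa(x_{0}+t\xi),\kappa(x_{0}+t\xi)\bigr)\;=\;2\,Q\bigl(D\kappa(x_{0})(\xi),\kappa(x_{0})\bigr),$$
where $D\kappa(x_{0})(\xi)=\sum_{k\geq 1}\sum_{i=0}^{k-1} m_{k}(x_{0}^{\otimes i}\otimes\xi\otimes x_{0}^{\otimes(k-1-i)})$. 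Invoking cyclicity of $Q$ to transport $\xi$ to the right slot of the pairing yields
$$Q\bigl(D\kappa(x_{0})(\xi),\kappa(x_{0})\bigr)\;=\;\pm\, Q\bigl(\Psi(x_{0}),\xi\bigr),\quad \Psi(x_{0})=\sum_{k,j}\pm\, m_{k}\bigl(x_{0}^{\otimes j}\otimes\kappa(x_{0})\otimes x_{0}^{\otimes(k-1-j)}\bigr).$$
Expanding $\kappa(x_{0})=\sum_{l}m_{l}(x_{0}^{\otimes l})$ inside $\Psi$ and regrouping the resulting double sum by the total number $N$ of copies of $x_{0}$ appearing, the $N$-th piece is exactly the $A_{\infty}$-associativity relation evaluated on $x_{0}^{\otimes N}$, hence vanishes. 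Therefore $\Psi(x_{0})\equiv 0$, which gives $d_{x_{0}}Q(\kappa,\kappa)=0$ for every direction $\xi$, and $Q(\kappa,\kappa)$ is constant.

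The main obstacle I expect is sign tracking. Cyclicity of $Q$ against $m_{k}$ introduces Koszul signs depending on the degrees of the arguments, the derivative $D\kappa$ carries signs depending on the insertion position of $\xi$, and the $A_{\infty}$-relation itself has Koszul signs. The computation simplifies substantially in the shifted grading, where $x_{0},\xi\in A^{1}$ carry shifted degree $0$ and therefore commute past everything under the Koszul rule, reducing nearly all signs to $+1$ and making the identification of $\Psi(x_0)$ with the $A_{\infty}$-relation transparent. One should also verify that Fukaya's sign conventions on the cyclic $A_{\infty}$-structure of $H^{*}(L;\Lambda_{0,nov}^{+})$ match those used in the algebraic lemma; this should be routine given his detailed conventions.
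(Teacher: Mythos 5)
Your proposal is correct and follows essentially the same route as the paper: both deduce the theorem from Fukaya's cyclic $A_{\infty}$-structure on $H^{*}(L;\Lambda_{0,nov}^{+})$ with respect to the Poincar\'{e} pairing, combined with a purely algebraic lemma for cyclic $A_{\infty}$-algebras whose proof rotates a degree-one entry through the pairing and recognizes the $A_{\infty}$-relation evaluated on tensor powers of $x$. The only cosmetic difference is that you prove constancy by showing the formal derivative $Q\bigl(D\kappa(x)(\xi),\kappa(x)\bigr)$ vanishes, whereas the paper directly shows each positive-degree homogeneous piece $\sum_{k_{1}+k_{2}=k+1}Q\bigl(m_{k_{1}}(x^{\otimes k_{1}}),m_{k_{2}}(x^{\otimes k_{2}})\bigr)$ vanishes (via cyclic rotation and an averaging factor $\tfrac{k+1}{2}$), which moreover identifies the constant as $Q(m_{0}(1),m_{0}(1))$ --- a value your argument also recovers by evaluating at $x=0$.
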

In fact, this result follows from a combination of the existence of a cyclic $A_{\infty}$-structure on $H^{*}(L;\Lambda_{0,nov}^{+})$ due to Fukaya \cite{fukaya} (see Theorem \ref{cyclic A inf str}) and the following lemma on cyclic $A_{\infty}$-algebras.
\begin{lemma}\label{lemma on cyclic str}
Given a cyclic $A_{\infty}$-algebra $(A,m_{k},Q)$, for any $k\geq0$ and $x\in A^{1}$, we have
\begin{equation}\sum_{k_{1}+k_{2}=k+1} Q(m_{k_{1}}(x^{\otimes k_{1}}),m_{k_{2}}(x^{\otimes k_{2}}))=0.  \nonumber\end{equation}
In particular, $Q(\kappa,\kappa)=Q(m_{0}(1),m_{0}(1))$, where $\kappa: A^{1}\rightarrow A^{2}$, $\kappa(x)\triangleq\sum_{k=0}^{\infty}m_{k}(x^{\otimes k})$
is the Kuranishi map of $(A,m_{k})$.
%and $Q$ is the quadratic form in Definition \ref{def of cyclic str}.
\end{lemma}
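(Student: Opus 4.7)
The plan is to combine the $A_\infty$-relations with the cyclic symmetry of the pairing $Q$. The crucial observation is that for $x \in A^{1}$ the shifted degree $|x|' = |x| - 1 = 0$, so with the standard sign conventions for cyclic $A_\infty$-algebras every sign arising in the argument collapses to $+1$; the only element of nonzero shifted degree that ever appears is $m_{k_2}(x^{\otimes k_2})$, which has shifted degree $1$.

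Concretely, I would start from the $A_\infty$-relation applied to $x^{\otimes k}$:
\[
0 = \sum_{\substack{k_1+k_2 = k+1 \\ k_1 \geq 1,\, k_2 \geq 0}} \sum_{i=0}^{k_1-1} m_{k_1}\bigl(x^{\otimes i},\, m_{k_2}(x^{\otimes k_2}),\, x^{\otimes k_1-1-i}\bigr),
\]
pair both sides with $x$ via $Q$, and then use the cyclic property to rotate the insertion $m_{k_2}(x^{\otimes k_2})$ into the outer slot of $Q$. Each intermediate rotation either moves a copy of $x$ (shifted degree $0$) past the remaining arguments, or moves $m_{k_2}(x^{\otimes k_2})$ (shifted degree $1$) past a block of $x$'s whose total shifted degree is $0$; in both cases the Koszul sign is $+1$. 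Consequently, for every $i$,
\[
Q\bigl(m_{k_1}(x^{\otimes i}, m_{k_2}(x^{\otimes k_2}), x^{\otimes k_1-1-i}),\, x\bigr) = Q\bigl(m_{k_1}(x^{\otimes k_1}),\, m_{k_2}(x^{\otimes k_2})\bigr),
\]
independently of $i$, so the inner sum over $i$ contributes an overall factor of $k_1$.

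To finish, I would exploit that each $m_{k_j}(x^{\otimes k_j})$ lies in $A^{2}$ (since $m_{k_j}$ has degree $2-k_j$), so the graded-symmetric pairing $Q$ is honestly symmetric on these arguments. Relabeling $k_1 \leftrightarrow k_2$ in the identity $\sum k_1\, Q(m_{k_1}, m_{k_2}) = 0$ produces $\sum k_2\, Q(m_{k_1}, m_{k_2}) = 0$; adding the two yields
\[
(k+1)\sum_{k_1+k_2=k+1} Q\bigl(m_{k_1}(x^{\otimes k_1}),\, m_{k_2}(x^{\otimes k_2})\bigr) = 0,
\]
from which the stated identity follows since $k+1 \geq 1$. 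The ``in particular'' claim is then immediate from expanding $Q(\kappa, \kappa) = \sum_{n \geq 0} \sum_{k_1+k_2 = n} Q(m_{k_1}(x^{\otimes k_1}),\, m_{k_2}(x^{\otimes k_2}))$: the $n = 0$ term is precisely $Q(m_0(1), m_0(1))$, and every $n \geq 1$ layer vanishes by what has just been proved. The main obstacle in this plan is the careful bookkeeping of the cyclic Koszul signs, but the reduction to trivial signs that $|x|' = 0$ affords makes the whole argument essentially combinatorial.
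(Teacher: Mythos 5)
Your argument is correct and is essentially the paper's own proof: both use cyclicity of $Q$ (with all Koszul signs trivial because $x\in A^{1}$) to identify $Q\bigl(m_{k_{1}}(x^{\otimes r},m_{k_{2}}(x^{\otimes k_{2}}),x^{\otimes t}),x\bigr)$ with $Q\bigl(m_{k_{1}}(x^{\otimes k_{1}}),m_{k_{2}}(x^{\otimes k_{2}})\bigr)$, count the multiplicity $k_{1}$, symmetrize in $k_{1}\leftrightarrow k_{2}$ to produce the factor $k+1$ (the paper's $\tfrac{k+1}{2}$), and kill the sum with the $A_{\infty}$-relation applied to $x^{\otimes k}$ paired against $x$. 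The only difference is the direction of presentation, so there is nothing further to add.
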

\begin{proof}
From Definition \ref{def of cyclic str}, given $k_{1},k_{2}\geq0$ with $k_{1}+k_{2}\geq1$, we have
\begin{equation}Q(m_{k_{1}}(x^{\otimes k_{1}}),m_{k_{2}}(x^{\otimes k_{2}}))=Q(m_{k_{1}}(x^{\otimes r},m_{k_{2}}(x^{\otimes k_{2}}),x^{\otimes t}),x), \nonumber\end{equation}
for $r,t\geq0$ with $r+t+1=k_{1}$. We fix $k_{1}+k_{2}=k+1\geq1$, then
\begin{equation}(\frac{k+1}{2})\sum_{k_{1}+k_{2}=k+1} Q(m_{k_{1}}(x^{\otimes k_{1}}),m_{k_{2}}(x^{\otimes k_{2}}))=\sum_{k_{1}+k_{2}=k+1}\sum_{r+t+1=k_{1}} Q(m_{k_{1}}(x^{\otimes r},m_{k_{2}}(x^{\otimes k_{2}}),x^{\otimes t}),x), \nonumber\end{equation}
which is zero by the $A_{\infty}$-relation.
\end{proof}
%\begin{remark}
%When applied to moduli spaces of simple sheaves, this gives the third proof\footnote{See also Tu \cite{tu}.} of BBJ's local ¡¯Darboux theorem¡¯ (Theorem \ref{B-side local Darboux thm}), where the original proof used derived geometry and cyclic homology \cite{bbj} and the authors gave a second proof in analytic version using gauge theory (See Theorem 10.7 of \cite{caoleung}).
%\end{remark}
On $CY_{4}$, local 'Darboux models' for moduli spaces of stable sheaves (Theorem \ref{B-side local Darboux thm}) have an application to the unobstructedness of these moduli spaces (Corollary \ref{unobs in B side}). We expect a similar result for moduli spaces of Maurer-Cartan elements of $A_{\infty}$-algebras $H^{*}(L;\Lambda_{0,nov}^{+})$'s (one could work with non-archemedian geometry to make sense the moduli space as done in \cite{fukaya}).

By SYZ mirror symmetry proposal \cite{syz}, \cite{lyz} and Kontsevich's HMS conjecture \cite{kont}, a sheaf (with $Ext^{*}$ group) in the B-model is mirror to a Lagrangian (we take the flat bundle to be trivial for simplicity) with a bounding cochain (i.e. a Maurer-Cartan element which helps to define $HF^{*}$) in the A-model. As a corollary of Theorem \ref{A-side local Darboux thm}, the unobstructedness result in the A-model should be stated as follows.

We start with an \emph{unobstructed} Lagrangian \footnote{i.e. there exists $b\in H^{1}(L;\Lambda_{0,nov}^{+})$ such that $\kappa(b)=0$.}, define the twisted $A_{\infty}$-algebra $(H^{*}(L;\Lambda_{0,nov}^{+}),m_{k}^{b})$
\begin{equation}m_{k}^{b}(x_{1},\cdot\cdot\cdot,x_{k})=
\sum_{n\geq k}\sum m_{n}(b,\cdot\cdot\cdot,b,x_{1},b,\cdot\cdot\cdot,b,\cdot\cdot\cdot,x_{k},b,\cdot\cdot\cdot,b),  \nonumber \end{equation}
where the first summation is taken over all such expressions.
The corresponding Kuranishi map
\begin{equation}\kappa^{b}:H^{1}(L;\Lambda_{0,nov}^{+})\rightarrow H^{2}(L;\Lambda_{0,nov}^{+}), \quad \kappa^{b}(x)=\sum_{k=0}^{\infty}m^{b}_{k}(x^{\otimes k})  \nonumber \end{equation}
is similarly defined.

$(H^{*}(L;\Lambda_{0,nov}^{+}),m_{k}^{b})$ is a cyclic $A_{\infty}$-algebra with $m_{0}^{b}(1)=0$ provided that $(H^{*}(L;\Lambda_{0,nov}^{+}),m_{k})$ is a cyclic $A_{\infty}$-algebra \cite{fukaya}. The unobstructedness result says if the space of bounding cochains $b$'s is nonempty, then it is the whole $H^{1}(L;\Lambda_{0,nov}^{+})$, i.e.
\begin{theorem}\label{cor on kuranishi map}
Let $L\subseteq X$ be a definite\footnote{i.e. the intersection form on $H^{2}(L,\mathbb{R})$ is definite.} and unobstructed Lagrangian in a $CY_{4}$. Then \\
(1) $\kappa\equiv0$; (2) for any $b\in H^{1}(L;\Lambda_{0,nov}^{+})$, $\kappa^{b}\equiv0$.
\end{theorem}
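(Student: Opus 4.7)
The plan is to reduce part (1) to Theorem~\ref{A-side local Darboux thm} combined with the definiteness hypothesis, and then to bootstrap part (2) from (1) by applying the same argument to the twisted cyclic $A_\infty$-structure.

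For part (1), Theorem~\ref{A-side local Darboux thm} together with Lemma~\ref{lemma on cyclic str} gives that
\begin{equation}
Q(\kappa(x),\kappa(x))=Q(m_{0}(1),m_{0}(1)) \nonumber
\end{equation}
is independent of $x\in H^{1}(L;\Lambda_{0,nov}^{+})$. By unobstructedness, there exists a bounding cochain $b$ with $\kappa(b)=0$; evaluating the above constant at $x=b$ forces $Q(m_{0}(1),m_{0}(1))=0$, so $Q(\kappa(x),\kappa(x))=0$ for every $x$.

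The next step converts this identity into $\kappa\equiv 0$, and this is where the definiteness hypothesis enters. Writing $H^{2}(L;\Lambda_{0,nov}^{+})\cong H^{2}(L,\mathbb{R})\otimes_{\mathbb{R}}\Lambda_{0,nov}^{+}$ and using the $T$-adic valuation on the Novikov ring, every nonzero element $y$ admits a well-defined leading $\mathbb{R}$-coefficient $y_{0}\in H^{2}(L,\mathbb{R})$ at a minimal exponent $T^{\lambda_{\min}}$. Since the intersection form on $H^{2}(L,\mathbb{R})$ is definite, $Q(y_{0},y_{0})\neq 0$, which forces the coefficient of $T^{2\lambda_{\min}}$ in $Q(y,y)$ to be nonzero. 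Contrapositively, $Q(y,y)=0$ implies $y=0$; applying this to $y=\kappa(x)$ yields $\kappa\equiv 0$.

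For part (2), the paragraph preceding the theorem already records that $(H^{*}(L;\Lambda_{0,nov}^{+}),m_{k}^{b},Q)$ is itself a cyclic $A_\infty$-algebra with $m_{0}^{b}(1)=0$. The vanishing $m_{0}^{b}(1)=0$ means $0$ is a Maurer-Cartan element for the twisted structure, so the twisted algebra is again unobstructed; meanwhile, the definiteness of $L$ is a purely topological property, unaffected by the twist. Applying part (1) to $(H^{*}(L;\Lambda_{0,nov}^{+}),m_{k}^{b})$ in place of $(H^{*}(L;\Lambda_{0,nov}^{+}),m_{k})$ then immediately gives $\kappa^{b}\equiv 0$. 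The most delicate step throughout is the valuation argument in the middle paragraph: one must verify that sorting elements of $H^{2}(L;\Lambda_{0,nov}^{+})$ by the $T$-adic filtration (and handling the auxiliary $e$-grading of the Novikov ring via linear independence of the monomials $T^{\lambda}e^{n}$) really extracts an $\mathbb{R}$-valued leading term on which the definiteness of the intersection form can bite. This is the only point that requires a genuine check beyond a formal combination of results already established in the paper.
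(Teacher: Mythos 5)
Your proposal is correct and follows essentially the same route as the paper: use unobstructedness plus Theorem \ref{A-side local Darboux thm} to get $Q(\kappa,\kappa)=Q(m_{0}(1),m_{0}(1))=Q(\kappa(b),\kappa(b))=0$, conclude $\kappa\equiv 0$ from definiteness, and for (2) run the same cyclic-$A_\infty$ argument (Lemma \ref{lemma on cyclic str}) on the twisted algebra with $m_{0}^{b}(1)=\kappa(b)=0$. The only difference is that you spell out, via the $T$-adic leading-term argument, how definiteness of the intersection form on $H^{2}(L,\mathbb{R})$ forces vanishing over $\Lambda_{0,nov}^{+}$, a step the paper leaves implicit; that check is correct as you outline it.
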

\begin{proof}
(1) Since $L$ is unobstructed, there exists $b$ with $\kappa(b)=\sum_{k=0}^{\infty}m_{k}(b^{\otimes k})=0$. By Theorem \ref{A-side local Darboux thm}, we have
\begin{equation}Q(\kappa,\kappa)=Q(m_{0}(1),m_{0}(1))=Q(\kappa(b),\kappa(b))=0. \nonumber \end{equation}
The definite quadratic form $Q$ on $H^{2}(L,\mathbb{R})$ gives $\kappa\equiv0$, i.e. any element $b\in H^{1}(L;\Lambda_{0,nov}^{+})$ is a bounding cochain.

(2) We define the twisted $A_{\infty}$-algebra $(H^{*}(L;\Lambda_{0,nov}^{+}),m_{k}^{b})$ which is still cyclic \cite{fukaya}. As $m_{0}^{b}(1)=\kappa(b)=0$, we apply Lemma \ref{lemma on cyclic str} to $(H^{*}(L;\Lambda_{0,nov}^{+}),m_{k}^{b})$ and obtain $Q(\kappa^{b},\kappa^{b})=0$. By the definite quadratic form on $H^{2}(L,\mathbb{R})$, we have $\kappa^{b}\equiv0$.
\end{proof}
\begin{remark}
By Donaldson's renowned theorem \cite{d}, \cite{d1}, definite intersection forms on closed smooth 4-manifolds are diagonalizable over integers.   \\
\end{remark}

On some particular type of $CY_{4}$, say $K_{Y}$, where $Y$ is a compact Fano 3-fold, local Kuranishi maps for deformations of (compactly supported) stable sheaves have more refined structures than local 'Darboux models' in Theorem \ref{B-side local Darboux thm} (see Lemma 6.4 \cite{caoleung}).
%\begin{example}(Lemma 6.4 \cite{caoleung})\label{ex on local str for KY} ${}$ \\
%Let $\mathcal{F}$ be a slope-stable sheaf on a Fano 3-fold $Y$. Then we have canonical isomorphisms
%\begin{equation}Ext^{1}_{X}(\iota_{*}\mathcal{F},\iota_{*}\mathcal{F})\cong Ext^{1}_{Y}(\mathcal{F},\mathcal{F}), \nonumber \end{equation}
%\begin{equation}Ext^{2}_{X}(\iota_{*}\mathcal{F},\iota_{*}\mathcal{F})\cong
%Ext^{2}_{Y}(\mathcal{F},\mathcal{F})\oplus Ext^{2}_{Y}(\mathcal{F},\mathcal{F})^{*}, \nonumber \end{equation}
%where $\iota: Y\rightarrow K_{Y}=X$ is the zero section. Under this identification, $Ext^{2}_{Y}(\mathcal{F},\mathcal{F})$ is a maximal isotropic subspace of $Ext^{2}_{X}(\iota_{*}\mathcal{F},\iota_{*}\mathcal{F})$ with respect to the Serre duality pairing.
%There exists a local Kuranishi map
%\begin{equation}\kappa: Ext^{1}_{X}(\iota_{*}\mathcal{F},\iota_{*}\mathcal{F})\rightarrow
%Ext^{2}_{X}(\iota_{*}\mathcal{F},\iota_{*}\mathcal{F}) \nonumber \end{equation}
%for deformations of $\iota_{*}\mathcal{F}$ on $X$ such that it can be identified with a local Kuranishi map
%\begin{equation}Ext^{1}_{Y}(\mathcal{F},\mathcal{F})\rightarrow Ext^{2}_{Y}(\mathcal{F},\mathcal{F}) \nonumber \end{equation}
%for deformations of $\mathcal{F}$ on $Y$.
%\end{example}
The refined structure is deduced from the cyclic completion structure on $Ext^{*}(\iota_{*}\mathcal{F},\iota_{*}\mathcal{F})$ \cite{segal}. In general, on the canonical bundle $K_{Y}$ of a compact Fano $n$-fold $Y$, for any
coherent sheaf $\mathcal{F}$, we have
\begin{equation}\label{equ 2}Ext^{*}_{K_{Y}}(\iota_{*}\mathcal{F},\iota_{*}\mathcal{F})\cong Ext^{*}_{Y}(\mathcal{F},\mathcal{F})\oplus Ext^{n+1-*}_{Y}(\mathcal{F},\mathcal{F})^{*}.  \end{equation}
We are interested in its mirror analog in Lagrangian Floer theory (A-model),
and take $Y=\mathbb{P}^{n}$ as an example, whose mirror is given by a superpotential \cite{kont2}, \cite{horiiqbalvafa}
\begin{equation}W=\sum_{i=1}^{n}z_{i}+q(\prod_{i=1}^{n}z_{i})^{-1}: (\mathbb{C}^{*})^{n}\rightarrow \mathbb{C}. \nonumber \end{equation}
Kontsevich's HMS conjecture \cite{kont} predicts an equivalence\footnote{See Katzarkov, Kontsevich and Pantev \cite{kkp} for a summary and
Auroux, Katzarkov and Orlov \cite{ako} for some partial results.}
\begin{equation}D^{b}(\mathbb{P}^{n})\cong FS( (\mathbb{C}^{*})^{n},W)  \nonumber \end{equation}
between the derived category of $\mathbb{P}^{n}$ and the Fukaya-Seidel category \cite{seidel} of Lefschetz fibration $W$. We denote a Lefschetz thimble of $W$ to be $\Delta^{n}$ which is diffeomorphic to a $n$-dimensional disk.

The mirror of $K_{\mathbb{P}^{n}}$ \cite{horiiqbalvafa} is the hypersurface
\begin{equation}\check{X}=\{(x,y)\in(\mathbb{C}^{*})^{n}\times \mathbb{C}^{2}\textrm{ } | \textrm{ } y_{1}y_{2}+W(x)=z \},  \nonumber \end{equation}  where $z$ is a regular value of $W$. In \cite{seidel2}, Seidel introduced the suspension of Lefschetz fibrations and interpreted $\check{X}$ as
the double suspension of a regular fiber of $W$. Under the double suspension, $\partial(\Delta^{n})$ becomes a Lagrangian sphere $L$ ($\cong \mathbb{S}^{n+1}$) in $\check{X}$. Then one obtains the mirror analog of (\ref{equ 2})
\begin{equation}HF^{*}_{\check{X}}(L,L)\cong HF^{*}_{(\mathbb{C}^{*})^{n}}(\Delta^{n},\Delta^{n})\oplus HF^{n+1-*}_{(\mathbb{C}^{*})^{n}}(\Delta^{n},\Delta^{n})^{*}, \nonumber \end{equation}
where $HF^{*}_{(\mathbb{C}^{*})^{n}}(\Delta^{n},\Delta^{n})\triangleq H^{*}(\Delta^{n},\mathbb{Z})$.
%In fact, Seidel's work on toric del-Pezzo sufaces \cite{seidel2} should be able to be extended to this case and used to prove such isomorphisms for any two Lagrangian spheres $L_{i}$, $L_{j}$ coming from double suspensions.

\subsection{Cyclic $A_{\infty}$-algebras in Lagrangian Floer theory}
We recall definitions of cyclic $A_{\infty}$-algebras over a field $\mathbb{K}$ and their existences on Lagrangian Floer cohomologies which
are needed for the completion of a proof of Theorem \ref{A-side local Darboux thm}.
\begin{definition}(\cite{fukaya})
An $A_{\infty}$-algebra is a $\mathbb{Z}$-graded $\mathbb{K}$-vector space
\begin{equation}A=\bigoplus_{p\in\mathbb{Z}} A^{p}    \nonumber\end{equation}
endowed with graded maps
\begin{equation}m_{n}: A^{\otimes n}\rightarrow A, n\geq0    \nonumber\end{equation}
of degree $2-n$ such that for any $k\geq0$, we have
\begin{equation}\sum_{k_{1}+k_{2}=k+1}\sum_{i}(-1)^{deg(x_{1})+\cdot\cdot\cdot+deg( x_{i-1})+i-1}m_{k_{1}}(x_{1},\cdot\cdot\cdot,m_{k_{2}}(x_{i},\cdot\cdot\cdot,x_{i+k_{2}-1}),\cdot\cdot\cdot,x_{k}) )=0.   \nonumber\end{equation}
\end{definition}
As we do not require $m_{1}^{2}=0$, it is sometimes called curved $A_{\infty}$-algebra \cite{kelley}. Following \cite{fooo}, we call an $A_{\infty}$-algebra strict if $m_{0}=0$, in which case we have $m_{1}^{2}=0$. To reflect the Calabi-Yau $n$-algebra structure, we introduce the cyclic condition on $A_{\infty}$-algebras.
\begin{definition}\label{def of cyclic str}(\cite{fukaya})
A finite dimensional $A_{\infty}$-algebra $(A,m_{k})$ is called $n$-cyclic, if there exists a homogenous bilinear map
\begin{equation}Q: A\otimes A\rightarrow\mathbb{K}[-n]  \nonumber\end{equation}
such that

$\bullet$ $Q(x,y)=(-1)^{(deg x+1)(deg y+1)+1}Q(y,x)$, \\

$\bullet$ $Q(m_{k}(x_{1},...,x_{k}),x_{0})=(-1)^{*}Q(m_{k}(x_{0},...,x_{k-1}),x_{k})$, \\
${}$ \\
where $*=(deg(x_{0})+1)(deg(x_{1})+\cdot\cdot\cdot+deg(x_{k})+k)$.

%$\bullet$ $Q$ is non-degenerate on $H^{*}(A,m_{1})$.
\end{definition}
A typical example of strict $n$-cyclic $A_{\infty}$-algebra is the extension group of sheaves on compact Calabi-Yau $n$-folds \cite{poli}, \cite{ks1}, \cite{tu}. The mirror analog in Lagrangian Floer theory is due to Fukaya \cite{fukaya} and Fukaya, Oh, Ohta and Ono \cite{fooo}.

We take a relatively spin compact Lagrangian submanifold $L$ in a compact symplectic manifold $X$. The universal Novikov ring is
\begin{equation}\Lambda_{0,nov}=\bigg\{\sum_{i=0}^{\infty}a_{i}T^{\lambda_{i}}e^{n_{i}}\mid a_{i}\in \mathbb{Q}, \lambda_{i}\in\mathbb{R}_{\geq0},n_{i}\in\mathbb{Z}\textrm{ }  \textrm{and} \textrm{ } \lim_{i\rightarrow\infty}\lambda_{i}=\infty \bigg\}, \nonumber\end{equation}
with maximal ideal $\Lambda_{0,nov}^{+}$ which consists of elements such that $\lambda_{i}\in\mathbb{R}_{>0}$.
%which is $\mathbb{Z}_{2}$-graded.
If $L$ has zero Maslov index and $X$ is Calabi-Yau, $H^{*}(L;\Lambda_{0,nov}^{+})=H^{*}(L;\mathbb{Q})\otimes_{\mathbb{Q}}\Lambda_{0,nov}^{+}$ will have a $\mathbb{Z}$-graded cyclic $A_{\infty}$-algebra structure, i.e.
\begin{theorem}\label{cyclic A inf str}(Fukaya \cite{fukaya}, Fukaya-Oh-Ohta-Ono \cite{fooo})${}$ \\
Let $L$ be a compact relatively spin Lagrangian submanifold of zero Maslov index inside a Calabi-Yau $n$-fold $X$ \footnote{It is compact or convex at infinity with $c_{1}(X)=0$.}. Then $H^{*}(L;\Lambda_{0,nov}^{+})$ has a $n$-cyclic $A_{\infty}$-algebra structure with respect to the Poincar\'{e} pairing, which is well-defined up to isomorphisms.
\end{theorem}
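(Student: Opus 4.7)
The plan is to follow Fukaya-Oh-Ohta-Ono's construction of a filtered $A_\infty$-structure on the cochain complex of $L$ by counting pseudoholomorphic disks, then transfer it to cohomology via a cyclically symmetric homological perturbation.

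First I would set up, for each class $\beta\in\pi_{2}(X,L)$ and each $k\geq 0$, the moduli space $\mathcal{M}_{k+1}(\beta)$ of stable maps $u:(D^{2},\partial D^{2})\to (X,L)$ representing $\beta$ with $k+1$ counterclockwise-ordered boundary marked points. Relative spinness of $L$ supplies a system of coherent orientations on these moduli spaces; the vanishing Maslov index together with $c_{1}(X)=0$ produces a $\mathbb{Z}$-grading on $H^{*}(L;\Lambda_{0,nov}^{+})$ compatible with the degree $2-k$ of
\[
m_{k}(x_{1},\ldots,x_{k})=\sum_{\beta}T^{\omega(\beta)}\,(ev_{0})_{*}\bigl((ev_{1},\ldots,ev_{k})^{*}(x_{1}\otimes\cdots\otimes x_{k})\bigr).
\]
Because the moduli spaces are in general obstructed, fundamental chains are replaced by virtual chains via Kuranishi structures with compatible multisections, and the $A_{\infty}$-relations then follow from a codimension-one boundary analysis (disk bubbling) of $\mathcal{M}_{k+1}(\beta)$.

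Next I would establish the cyclic property with respect to the Poincar\'{e} pairing $Q(x,y)=\int_{L}x\wedge y$. The key geometric input is the $\mathbb{Z}/(k+1)$-action on $\mathcal{M}_{k+1}(\beta)$ given by cyclic rotation of boundary marked points, which interchanges the distinguished zeroth marked point with the others up to a computable sign. Pairing $m_{k}(x_{1},\ldots,x_{k})$ against $x_{0}$ under the integration pairing precisely permutes the roles of $x_{0}$ and $x_{k}$, yielding the identity in Definition \ref{def of cyclic str} with the sign $(-1)^{*}$ dictated by relative spin orientation conventions. The delicate point is that this geometric symmetry must survive the virtual perturbation: following Fukaya \cite{fukaya}, one chooses Kuranishi structures and multisections on $\mathcal{M}_{k+1}(\beta)$ that are $\mathbb{Z}/(k+1)$-invariant, constructed inductively in $k$ and $\omega(\beta)$ while maintaining compatibility with the boundary strata.

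Finally, I would transport the cyclic $A_{\infty}$-structure from the cochain complex to $H^{*}(L;\Lambda_{0,nov}^{+})$ via a cyclic version of the homological perturbation lemma: choosing a harmonic projection together with a Green's homotopy that is self-adjoint for $Q$ produces induced operations on cohomology that again satisfy the cyclic axiom, and well-definedness up to (cyclic) $A_{\infty}$-isomorphism follows from the standard obstruction argument for minimal models. The main obstacle throughout is the technical one of arranging cyclically symmetric virtual perturbations consistently across all $(k,\beta)$ without breaking the compatibility with boundary decompositions; this is the core of Fukaya's argument, and once it is in place the remainder of the proof is a packaging of the FOOO machinery with these cyclic choices.
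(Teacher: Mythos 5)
The paper does not prove this statement at all: Theorem \ref{cyclic A inf str} is quoted as an external input, attributed to Fukaya \cite{fukaya} and Fukaya--Oh--Ohta--Ono \cite{fooo}, and the paper only uses it as a black box (together with Lemma \ref{lemma on cyclic str}) to deduce Theorem \ref{A-side local Darboux thm}. So the relevant comparison is between your sketch and the cited construction. Your outline is a reasonable summary of that construction: disk moduli spaces $\mathcal{M}_{k+1}(\beta)$ with boundary marked points, coherent orientations from the relative spin structure, $\mathbb{Z}$-grading from vanishing Maslov index, $A_{\infty}$-relations from codimension-one disk bubbling, cyclic symmetry from the rotation action on marked points, and passage to the canonical model on cohomology with a pairing-compatible homotopy. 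The one place where your sketch misstates the actual mechanism is the perturbation scheme: in \cite{fukaya} the cyclic symmetry is \emph{not} obtained by choosing $\mathbb{Z}/(k+1)$-invariant (single) multisections on a singular-chain model. Ordinary multisections and singular chains are incompatible with both the symmetrization and the pushforward $(ev_{0})_{*}$; Fukaya instead works with the de Rham complex of $L$ and continuous families of multisections (so that integration along the fiber of $ev_{0}$ is defined and the perturbations can be averaged to be cyclically invariant), and then obtains the cyclic structure on $H^{*}(L;\Lambda_{0,nov}^{+})$ by a pairing-preserving canonical-model construction, unique up to isomorphism. Since you explicitly defer the hard perturbation-theoretic step to Fukaya, this is a correction of detail rather than a gap in logic, but as written the claim that one can take invariant multisections is not how the cited proof works and would run into exactly the transversality/pushforward problems that forced Fukaya into the de Rham and continuous-family framework.
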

Finally, by combining Theorem \ref{cyclic A inf str} and Lemma \ref{lemma on cyclic str}, we finish the proof of Theorem \ref{A-side local Darboux thm}.

\section{Appendix on the B-model story$-DT_{4}$ theory}
We recall some basic facts in Donaldson-Thomas theory on Calabi-Yau 4-folds. The main references are Borisov-Joyce's article \cite{bj} and the authors's preprints \cite{cao, caoleung, caoleung3, caoleung2}.

We start with a compact Calabi-Yau 4-fold $(X,\mathcal{O}_{X}(1))$ ($Hol(X)=SU(4)$) with a Ricci-flat K\"ahler metric $g$ \cite{yau}, a K\"ahler form $\omega$, a holomorphic four-form $\Omega$, and a topological bundle with a Hermitian metric $(E,h)$. We define
\begin{equation}*_{4}=(\Omega\lrcorner)\circ*: \Omega^{0,2}(X,EndE)\rightarrow \Omega^{0,2}(X,EndE),
\nonumber \end{equation}
with $*_{4}^{2}=1$ and it splits the corresponding harmonic subspace into (anti-)self-dual parts.

The $DT_{4}$ equations are defined to be
\begin{equation}\label{DT4 equations} \left\{ \begin{array}{l}
  F^{0,2}_{+}=0 \\ F\wedge\omega^{3}=0 ,    % uses matrix to express the cpx ASD equations
\end{array}\right. \end{equation}
where the first equation is $F^{0,2}+*_{4}F^{0,2}=0$ and we assume $c_{1}(E)=0$ for simplicity in the moment map equation $ F\wedge\omega^{3}=0$.

We denote $\mathcal{M}^{DT_{4}}(X,g,[\omega],c,h)$ or simply $\mathcal{M}^{DT_{4}}_{c}$ to be the space of
gauge equivalence classes of solutions to the $DT_{4}$ equations on $E$ (with $ch(E)=c$).

We take $\mathcal{M}_{c}^{bdl}$ to be the moduli space of slope-stable holomorphic bundles with fixed Chern character $c$.
By Donaldson-Uhlenbeck-Yau's theorem \cite{UY}, we can identify it with the moduli space of gauge equivalence classes of
solutions to the holomorphic HYM equations
\begin{equation}\label{hym} \left\{ \begin{array}{l}
  F^{0,2}=0 \\ F\wedge\omega^{3}=0.    % uses matrix to express the cpx ASD equations
\end{array}\right.\end{equation}
By Lemma 4.1 \cite{caoleung}, if $ch_{2}(E)\in H^{2,2}(X,\mathbb{C})$,
then $F^{0,2}_{+}=0\Rightarrow F^{0,2}=0$. In particular, if $\mathcal{M}_{c}^{bdl}\neq\emptyset$,
then $\mathcal{M}_{c}^{DT_{4}}\cong\mathcal{M}_{c}^{bdl}$ as sets.
The comparison of analytic structures is given by
\begin{theorem}\label{mo mDT4}(Theorem 1.1 \cite{caoleung}) We assume $\mathcal{M}_{c}^{bdl}\neq\emptyset$ and
fix $d_{A}\in\mathcal{M}_{c}^{DT_{4}}$, then  \\
(1) there exists a Kuranishi map $\tilde{\tilde{\kappa}}$ of $\mathcal{M}_{c}^{bdl}$ at $\overline{\partial}_{A}$
(the (0,1) part of $d_{A}$) such that $\tilde{\tilde{\kappa}}_{+}$ is a Kuranishi map of $\mathcal{M}_{c}^{DT_{4}}$ at $d_{A}$, where
\begin{equation} \xymatrix@1{
\tilde{\tilde{\kappa}}_{+}=\pi_{+}(\tilde{\tilde{\kappa}}): H^{0,1}(X,EndE) \ar[r]^{\quad \quad \quad \tilde{\tilde{\kappa}}}
& H^{0,2}(X,EndE)\ar[r]^{\pi_{+}} & H^{0,2}_{+}(X,EndE) }  \nonumber \end{equation}
and $\pi_{+}$ is projection to the self-dual forms; \\
(2) the closed imbedding between analytic spaces possibly with non-reduced structures $\mathcal{M}_{c}^{bdl}\hookrightarrow \mathcal{M}_{c}^{DT_{4}}$
%\begin{equation}\mathcal{M}_{c}^{bdl}\hookrightarrow \mathcal{M}_{c}^{DT_{4}}  \nonumber \end{equation}
is also a homeomorphism between topological spaces.
\end{theorem}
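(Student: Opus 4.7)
The plan is to reduce the theorem to a comparison of Kuranishi models, since the set-theoretic identification is essentially in hand from Lemma 4.1 of \cite{caoleung}: under the cohomological hypothesis $ch_2(E) \in H^{2,2}(X,\mathbb{C})$, any solution of $F^{0,2}_+ = 0$ satisfies $F^{0,2} = 0$, so via Donaldson--Uhlenbeck--Yau the underlying sets of $\mathcal{M}_c^{bdl}$ and $\mathcal{M}_c^{DT_4}$ coincide. The real content of the theorem lies in matching the two analytic structures at a fixed $d_A$, which I would encode by building compatible Kuranishi models that share a common slice.

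For part (1), I would build the HYM Kuranishi model by the standard recipe applied to the holomorphic-side deformation problem (the moment map equation $F \wedge \omega^3 = 0$ is absorbed by the slope-stability hypothesis via Donaldson--Uhlenbeck--Yau). Working in the unitary Coulomb slice $\bar{\partial}_A^* \xi = 0$, for small $a \in H^{0,1}(X, EndE)$ the ellipticity of the Dolbeault Laplacian and the implicit function theorem produce a unique small $\xi = \xi(a)$ orthogonal to the harmonic forms satisfying
\[
P^{\perp}\bigl(\bar{\partial}_A \xi + (a+\xi)\wedge (a+\xi)\bigr) = 0,
\]
and the residual harmonic part defines $\tilde{\tilde{\kappa}}(a) \in H^{0,2}(X, EndE)$. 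I would then argue that the same family $\{a + \xi(a)\}$ serves as a Kuranishi chart for $\mathcal{M}_c^{DT_4}$ at $d_A$: the gauge group, its linearization, and hence the slice are identical; and because $\xi(a)$ was chosen to kill the \emph{entire} non-harmonic component of $F^{0,2}$, it automatically kills the non-harmonic self-dual part, which is precisely what the self-dual equation requires in the slice. The residual obstruction is then $\pi_+ \tilde{\tilde{\kappa}}(a) = \tilde{\tilde{\kappa}}_+(a)$ by construction. Part (2) then follows by reading off the local picture: the embedding is locally modeled by $\{\tilde{\tilde{\kappa}} = 0\} \hookrightarrow \{\tilde{\tilde{\kappa}}_+ = 0\}$, a closed embedding of analytic germs; Lemma 4.1 identifies the underlying topological points, so this closed embedding is surjective on points and hence a homeomorphism.

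The step I expect to require the most care is surjectivity of the DT4 Kuranishi chart: showing that every solution of $F^{0,2}_+ = 0$ sufficiently close to $d_A$ modulo gauge is represented by some $a + \xi(a)$ in the shared slice. Even though the slice and obstruction space are defined in analogy with the HYM case, the weaker self-dual equation could conceivably admit gauge orbits that elude the slice, and ruling this out requires either a direct slice argument for the full unitary gauge group with respect to the DT4 equations, or an indirect argument that invokes Lemma 4.1 to transplant DT4 solutions back to HYM solutions where the standard slice theorem applies. Once this surjectivity is established, both parts of the theorem follow.
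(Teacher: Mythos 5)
This note does not actually prove Theorem \ref{mo mDT4}: it is quoted verbatim from Theorem 1.1 of \cite{caoleung} as background for the appendix, so there is no in-paper proof to compare against; your proposal has to be judged on its own. On those terms it correctly isolates what the theorem is about (the set-theoretic coincidence is essentially Lemma 4.1 of \cite{caoleung} plus Donaldson--Uhlenbeck--Yau; the content is the comparison of local analytic models), but the decisive step is missing, and you say so yourself. Producing the holomorphic-side family $a+\xi(a)$ with $P^{\perp}F^{0,2}=0$ and observing that it also satisfies $P^{\perp}_{+}F^{0,2}=0$ only gives a map from the germ $\{\tilde{\tilde{\kappa}}=0\}$ into the DT${}_4$ local model; it does not show that $\tilde{\tilde{\kappa}}_{+}$ \emph{is} a Kuranishi map of $\mathcal{M}_{c}^{DT_{4}}$. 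The DT${}_4$ problem has its own implicit-function-theorem reduction, whose auxiliary solution $\xi'(a)$ kills only the non-harmonic part of the self-dual projection (together with the gauge-fixing and moment-map directions), and a priori $\xi'(a)\neq\xi(a)$; one must prove that the two families, or at least the two obstruction-map germs (with their possibly non-reduced structures), agree. Lemma 4.1 cannot simply be ``transplanted'' for this, because it applies to honest solutions of $F^{0,2}_{+}=0$, i.e.\ to points where the harmonic obstruction already vanishes, whereas the identification of Kuranishi maps is a statement about the whole chart. The argument that closes this gap is the quadratic identity underlying Theorem \ref{B-side local Darboux thm} (the vanishing of $\int \mathrm{Tr}(F^{0,2}\wedge F^{0,2})\wedge\Omega$ and the resulting positivity on the self-dual part), applied along the family together with harmonic theory; without some such input the surjectivity/uniqueness you flag is exactly the theorem, not a technicality.

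A second, related gap: your proposed DT${}_4$ chart is built entirely out of $(0,1)$-form deformations of $\overline{\partial}_{A}$ in the Dolbeault Coulomb slice, but $\mathcal{M}_{c}^{DT_{4}}$ is cut out by the pair of equations $F^{0,2}_{+}=0$ and $F\wedge\omega^{3}=0$ modulo the \emph{unitary} gauge group, and $F^{0,2}_{+}=0$ is not invariant under the complex gauge group, so the moment-map equation cannot be ``absorbed by stability'' on that side. The elements $a+\xi(a)$ do not satisfy $F\wedge\omega^{3}=0$, so as written your family does not even lie in the DT${}_4$ solution space; the comparison has to interlace the complex-gauge/moment-map analysis (solving $F\wedge\omega^{3}=0$ along complex gauge orbits) with the self-duality analysis, and to match the real (unitary) deformation complex of the DT${}_4$ system with $H^{0,1}(X,End E)$ and $H^{0,2}_{+}(X,End E)$. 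This bookkeeping is where much of the work in \cite{caoleung} lies, and the proposal passes over it with ``the gauge group, its linearization, and hence the slice are identical,'' which is not accurate as stated.
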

\begin{remark}
By Proposition 10.10 \cite{caoleung}, the map $\tilde{\tilde{\kappa}}$ satisfies $Q_{Serre}(\tilde{\tilde{\kappa}},\tilde{\tilde{\kappa}})\geq0$,
where $Q_{Serre}$ is the Serre duality pairing on $H^{0,2}(X,EndE)$.
\end{remark}
To define Donaldson type invariants using $\mathcal{M}_{c}^{DT_{4}}$, we need to give it a good compactification (such that it carries
a deformation invariant fundamental class). For this purpose, we introduce $\mathcal{M}_{c}(X,\mathcal{O}_{X}(1))$ or
simply $\mathcal{M}_{c}$ to be the Gieseker moduli space of $\mathcal{O}_{X}(1)$-stable sheaves on $X$ with given Chern character $c$.
Motivated by Theorem \ref{mo mDT4}, we make the following definition.
\begin{definition}\label{gene DT4 moduli}(\cite{caoleung})
We call a $C^{\infty}$-scheme, $\overline{\mathcal{M}}^{DT_{4}}_{c}$ generalized $DT_{4}$ moduli space if there exists a homeomorphism
\begin{equation}\mathcal{M}_{c}\rightarrow\overline{\mathcal{M}}^{DT_{4}}_{c}  \nonumber \end{equation}
such that at each closed point of $\mathcal{M}_{c}$, say $\mathcal{F}$, $\overline{\mathcal{M}}^{DT_{4}}_{c}$ is locally isomorphic
to $\kappa_{+}^{-1}(0)$, where
\begin{equation}\kappa_{+}=\pi_{+}\circ\kappa:Ext^{1}(\mathcal{F},\mathcal{F})\rightarrow Ext^{2}_{+}(\mathcal{F},\mathcal{F}),
\nonumber \end{equation}
$\kappa$ is a Kuranishi map at $\mathcal{F}$
and $Ext^{2}_{+}(\mathcal{F},\mathcal{F})$ is a half dimensional real subspace of $Ext^{2}(\mathcal{F},\mathcal{F})$ on which the
Serre duality quadratic form is real and positive definite.
\end{definition}
\begin{remark}${}$ \\
1. The existence of generalized $DT_{4}$ moduli spaces is proved by Borisov-Joyce \cite{bj}. The authors proved their existence
as real analytic spaces in certain cases and defined the corresponding virtual fundamental classes \cite{cao},\cite{caoleung}. \\
2. For fixed data $(X,\mathcal{O}_{X}(1),c)$, the generalized $DT_{4}$ moduli space may not be unique. However,
they all carry the same virtual fundamental classes.
\end{remark}
The proof of Borisov-Joyce's gluing result is divided into two parts. Firstly, they used good local models of $\mathcal{M}_{c}$, i.e.
local 'Darboux charts' in the sense of Brav, Bussi and Joyce \cite{bbj}. Then they choosed the half dimensional real subspace
$Ext^{2}_{+}(\mathcal{F},\mathcal{F})$ appropriately and used partition of unity and homotopical algebra to glue $\kappa_{+}$.
We state an analytic version of the local 'Darboux charts' and give a proof using gauge theory.
\begin{theorem}\label{B-side local Darboux thm}(Brav-Bussi-Joyce \cite{bbj} Corollary 5.20, see also Theorem 10.7 \cite{caoleung})  ${}$ \\
Let $\mathcal{M}_{c}$ be a Gieseker moduli space of stable sheaves on a compact $CY_{4}$.
For any closed point $\mathcal{F}\in\mathcal{M}_{c}$, there exists an analytic neighborhood $U_{\mathcal{F}}\subseteq\mathcal{M}_{c}$,
a holomorphic map near the origin
\begin{equation}\kappa: Ext^{1}(\mathcal{F},\mathcal{F})\rightarrow Ext^{2}(\mathcal{F},\mathcal{F})   \nonumber\end{equation}
such that $Q_{Serre}(\kappa,\kappa)=0$ and $\kappa^{-1}(0)\cong U_{\mathcal{F}}$ as complex analytic spaces possibly with non-reduced structures,
where $Q_{Serre}$ is the Serre duality pairing on $Ext^{2}(\mathcal{F},\mathcal{F})$.
\end{theorem}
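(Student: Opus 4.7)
The plan is to follow the gauge-theoretic route outlined in Theorem 10.7 of \cite{caoleung}, constructing the Kuranishi map directly and extracting the isotropy condition from the cyclic (Calabi-Yau) $L_{\infty}$-structure on the Dolbeault complex.

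First I would reduce to the locally free case by replacing the stable sheaf $\mathcal{F}$ with a locally free resolution, so that $Ext^{i}(\mathcal{F},\mathcal{F})\cong H^{0,i}_{\bar\partial_{A}}(X,End\,E)$. Fixing a Hermitian metric on $E$ produces a $\bar\partial_{A}$-Laplacian with Green's operator $G$ and harmonic projection $P$. The standard gauge-theoretic Kuranishi construction then solves the holomorphicity equation $\bar\partial_{A}a+a\wedge a=0$ in the Coulomb slice $\bar\partial_{A}^{*}a=0$ by iterating
\[
a=a_{1}-\bar\partial_{A}^{*}G(a\wedge a),\qquad a_{1}\in H^{0,1},
\]
and defines the naive Kuranishi map $\kappa_{0}(a_{1})=P(a\wedge a):Ext^{1}(\mathcal{F},\mathcal{F})\to Ext^{2}(\mathcal{F},\mathcal{F})$. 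A standard argument (implicit function theorem plus slice theorem) identifies $\kappa_{0}^{-1}(0)$ with an analytic neighborhood $U_{\mathcal{F}}$ of $\mathcal{F}$ in $\mathcal{M}_{c}$ as complex analytic germs.

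The content of the theorem is the isotropy $Q_{Serre}(\kappa,\kappa)=0$, where $Q_{Serre}(\alpha,\beta)=\int_{X}\text{tr}(\alpha\wedge\beta)\wedge\Omega$. The key point is that $(\Omega^{0,*}(End\,E),\bar\partial_{A},[\cdot,\cdot])$ together with the pairing $\int_{X}\text{tr}(\cdot)\wedge\Omega$ is a cyclic DGLA of dimension $4$ (a Calabi-Yau DGLA in the sense of Kontsevich-Soibelman), thanks to $\bar\partial\Omega=0$ and Stokes. Next I would perform a cyclic homotopy transfer (after Kajiura/Costello) to obtain a cyclic $L_{\infty}$-structure $(l_{k})$ on the minimal model $H^{0,*}(End\,E)$, strict since the ambient DGLA is strict. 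The associated Maurer-Cartan map $\kappa(x)=\sum_{k\geq2}l_{k}(x^{\otimes k})/k!$ is a Kuranishi map equivalent to $\kappa_{0}$ (same zero locus, same complex-analytic germ), and the isotropy $Q_{Serre}(\kappa,\kappa)=0$ then follows from the $L_{\infty}$-analog of Lemma \ref{lemma on cyclic str}: the proof is formally identical, using the cyclic symmetry of $l_{k}$ and the $L_{\infty}$-relations, together with $l_{0}=0$.

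The main obstacle will be the cyclic homotopy transfer. The naive map $\kappa_{0}$ does not itself satisfy the isotropy condition in general (already at second order, $\int_{X}\text{tr}\bigl(P(a_{1}\wedge a_{1})^{2}\bigr)\wedge\Omega$ need not vanish on harmonic $a_{1}$), so one is forced to add higher-order corrections that preserve the zero locus but make the obstruction series arise from a genuinely cyclic $L_{\infty}$-structure. Concretely, the contracting homotopy $\bar\partial_{A}^{*}G$ must be chosen so that the tree-level Feynman expansion becomes cyclically symmetric with respect to $Q_{Serre}$; equivalently, one organizes the holomorphic Chern-Simons functional
\[
CS(a)=\int_{X}\text{tr}\Bigl(\tfrac{1}{2}a\wedge\bar\partial_{A}a+\tfrac{1}{3}a\wedge a\wedge a\Bigr)\wedge\Omega
\]
into vertices and propagators that manifestly respect the pairing. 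Once the transfer is carried out in the cyclic category, the remaining identity is a purely algebraic consequence of Lemma \ref{lemma on cyclic str} (transposed to $L_{\infty}$), and the analytic identification $\kappa^{-1}(0)\cong U_{\mathcal{F}}$ is preserved since $\kappa$ and $\kappa_{0}$ differ by a formal change of coordinates on $Ext^{1}(\mathcal{F},\mathcal{F})$.
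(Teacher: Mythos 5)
Your route (cyclic homotopy transfer to a minimal cyclic $L_{\infty}$/$A_{\infty}$ model, then the algebraic isotropy identity of Lemma \ref{lemma on cyclic str}) is genuinely different from the paper's, which is much more direct: after using Seidel--Thomas twists \cite{js}, \cite{st} to replace $\mathcal{F}$ by an honest simple holomorphic bundle (inducing a local isomorphism of moduli), the isotropy is read off from Chern--Weil theory, namely $\int_{X}\mathrm{Tr}(F^{0,2}\wedge F^{0,2})\wedge\Omega=\int_{X}\mathrm{Tr}(F\wedge F)\wedge\Omega=-8\pi^{2}\int_{X}ch_{2}(E)\wedge\Omega=0$, since $ch_{2}(E)$ is a $(2,2)$-class once $E$ is holomorphic. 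No transfer machinery is needed because the quadratic pairing of the curvature obstruction is a topological constant that vanishes.

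Against this, your write-up has two genuine gaps. First, the reduction to the bundle case is wrong as stated: replacing $\mathcal{F}$ by a locally free resolution does not give $Ext^{i}(\mathcal{F},\mathcal{F})\cong H^{0,i}_{\bar\partial_{A}}(X,End\,E)$ for a single bundle $E$ unless $\mathcal{F}$ is already locally free; a resolution is a complex, $Ext^{*}$ is the hypercohomology of its endomorphism complex, and the real issue --- identifying the analytic germ of the Gieseker moduli space at $\mathcal{F}$ with a gauge-theoretic Kuranishi space carrying the cyclic pairing --- is precisely what the Seidel--Thomas twist trick accomplishes and what your reduction skips. Second, the step you yourself flag as the main obstacle, the cyclic transfer (cyclicity of the chosen contraction with respect to $Q_{Serre}$, convergence of the transferred series, and the claim that the corrected $\kappa$ defines the same analytic germ as $\kappa_{0}$), is exactly where the content of the theorem sits in your approach, and it is asserted rather than proved. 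Moreover, your motivating claim that the naive harmonic Kuranishi map fails isotropy already at second order is not correct: with the standard Hodge-theoretic homotopy $\bar\partial_{A}^{*}G$ the transferred structure is already cyclic (this is why harmonic contractions are used to produce cyclic models in the first place), and more simply the Chern--Weil identity above shows the full curvature pairing vanishes identically, so the higher-order ``corrections'' you propose are unnecessary. If you want to keep the algebraic route, you should either invoke the known cyclic $A_{\infty}$ structure on $Ext^{*}$ for sheaves on compact Calabi--Yau manifolds (\cite{poli}, \cite{ks1}, \cite{tu}) together with a comparison of its Maurer--Cartan germ with the Gieseker germ, or first perform the Seidel--Thomas reduction as in \cite{caoleung} and only then run the transfer argument.
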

\begin{proof}(Proof of Theorem 10.7 \cite{caoleung})
We use Seidel-Thomas twists \cite{js},\cite{st} transfer the problem to a problem on moduli spaces of holomorphic bundles and then notice that
\begin{equation}
\int Tr(F^{0,2}\wedge F^{0,2})\wedge\Omega=-8\pi^{2}\int ch_{2}(E)\wedge\Omega=0,  \nonumber \end{equation}
as $E$ is holomorphic.
\end{proof}
The above theorem has an application to the unobstructedness of Gieseker moduli spaces.
\begin{corollary}\label{unobs in B side}(Corollary 10.9 \cite{caoleung})
If for any closed point $\mathcal{F}\in\mathcal{M}_{c}$, $dim Ext^{2}(\mathcal{F},\mathcal{F})\leq 1$, then $\mathcal{M}_{c}$ is smooth, i.e. all Kuranishi maps are zero.
\end{corollary}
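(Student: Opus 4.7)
The plan is to extract smoothness from the quadratic constraint on the Kuranishi map supplied by Theorem \ref{B-side local Darboux thm}, using the fact that the hypothesis $\dim Ext^{2}(\mathcal{F},\mathcal{F})\leq 1$ leaves essentially no room for that constraint to be nontrivially satisfied.

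First I would reduce to a pointwise statement: by Theorem \ref{B-side local Darboux thm}, for each closed point $\mathcal{F}\in\mathcal{M}_{c}$ there is a holomorphic Kuranishi map
\begin{equation}\kappa:\mathrm{Ext}^{1}(\mathcal{F},\mathcal{F})\rightarrow \mathrm{Ext}^{2}(\mathcal{F},\mathcal{F}),\nonumber\end{equation}
defined near the origin, whose zero locus is locally isomorphic to $\mathcal{M}_{c}$ at $\mathcal{F}$ and which satisfies $Q_{Serre}(\kappa,\kappa)=0$. To prove smoothness it suffices to show $\kappa\equiv 0$ as a holomorphic germ; then $\kappa^{-1}(0)$ is a smooth germ of dimension $\dim \mathrm{Ext}^{1}(\mathcal{F},\mathcal{F})$.

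Next I would handle the two cases allowed by the hypothesis. If $\dim \mathrm{Ext}^{2}(\mathcal{F},\mathcal{F})=0$ there is nothing to prove. If $\dim \mathrm{Ext}^{2}(\mathcal{F},\mathcal{F})=1$, I would use that on a compact $CY_{4}$ Serre duality pairs $\mathrm{Ext}^{2}$ with itself non-degenerately, so after choosing a basis vector $e$ the pairing is $Q_{Serre}(e,e)=c\neq 0$. Writing $\kappa=f\cdot e$ for a holomorphic germ $f\in\mathcal{O}_{\mathrm{Ext}^{1}(\mathcal{F},\mathcal{F}),0}$, the identity $Q_{Serre}(\kappa,\kappa)=cf^{2}=0$ forces $f^{2}=0$, hence $f=0$ since the local ring of a polydisk at the origin is an integral domain. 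Therefore $\kappa$ vanishes identically near $0$.

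Combining these two cases at every $\mathcal{F}\in\mathcal{M}_{c}$ shows that all local Kuranishi maps are zero, so $\mathcal{M}_{c}$ is locally isomorphic to a smooth polydisk and is therefore smooth. The only subtle point is to ensure that the Serre duality pairing is genuinely nondegenerate on the one-dimensional space $\mathrm{Ext}^{2}(\mathcal{F},\mathcal{F})$; this is the main conceptual ingredient, but it is a direct consequence of the $CY_{4}$ condition ($Ext^{i}\cong Ext^{4-i\,*}$), and beyond this point the argument is purely algebraic and requires no further analysis.
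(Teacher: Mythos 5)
Your argument is correct and is essentially the intended one: the paper deduces this corollary (citing Corollary 10.9 of \cite{caoleung}) directly from the constraint $Q_{Serre}(\kappa,\kappa)=0$ of Theorem \ref{B-side local Darboux thm}, together with the nondegeneracy of the Serre pairing on $Ext^{2}(\mathcal{F},\mathcal{F})$, which in dimension at most one forces $\kappa\equiv 0$ exactly as you argue. Your handling of the two cases and the vanishing of the holomorphic germ $f$ from $f^{2}=0$ is sound, so nothing further is needed.
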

%To define Donaldson type invariants using $\overline{\mathcal{M}}^{DT_{4}}_{c}\cong \mathcal{M}_{c}$, besides compactness and transversality issues, we also need $c_{1}(\mathcal{L}_{\mathcal{M}_{c}})=0$ for the the determinant line bundle $\mathcal{L}_{\mathcal{M}_{c}}\rightarrow\mathcal{M}_{c}$ with $\mathcal{L}_{\mathcal{M}_{c}}|_{\mathcal{F}}\cong(\wedge^{top}Ext^{even}(\mathcal{F},\mathcal{F}))^{-1}\otimes\wedge^{top}Ext^{odd}
%(\mathcal{F},\mathcal{F})$ (see \cite{caoleung} and Theorem 2.7 \cite{caoleung2}). In fact, we have the following more general result.
%\begin{theorem}(Theorem 2.2 \cite{caoleung3})\label{existence of ori for all dim} \\
%Let $X$ be a compact Calabi-Yau $n$-fold with $Tor(H_{*}(X,\mathbb{Z}))=0$, $\mathcal{M}_{X}$ be a moduli space of simple sheaves with fixed Chern classes. Then, we have \\
%(1) if $n$ is even, $c_{1}(\mathcal{L}_{\mathcal{M}_{X}})=0$, \\
%(2) if $n$ is odd, $\mathcal{L}_{\mathcal{M}_{X}}$ has an (algebraic) square root.
%\end{theorem}


\begin{thebibliography}{99}
\bibitem{akahojoyce}M. Akaho and D. Joyce, \textit{Immersed Lagrangian floer theory}, J. Diff. Geom. 86 (2010), no. 3, 381-500.
\bibitem{alstonbao}G. Alston, E. Bao, \textit{Exact, graded, immersed Lagrangians and Floer theory}, arXiv:1407.3871, 2014.
\bibitem{ako} D. Auroux, L. Katzarkov and D. Orlov, \textit{Mirror symmetry for weighted projective planes and their noncommutative deformations},
Ann. of Math. (2) 167 (2008), no. 3, 867-943.
\bibitem{banyaga}A. Banyaga, \textit{The Structure of Classical Diffeomorphism Groups}, Mathematics and its Applications, Kluwer Academic Publishers, (1997).
\bibitem{bbbj}O. Ben-Bassat, C. Brav, V. Bussi and D. Joyce, \textit{A 'Darboux Theorem' for shifted symplectic structures on derived Artin stacks, with applications}, arXiv:1312.0090, 2013.
\bibitem{bj} D. Borisov and D. Joyce, \textit{Virtual fundamental classes for moduli spaces of sheaves on Calabi-Yau four-folds}, arXiv:1504.00690, 2015.
\bibitem{bbj} C. Brav, V. Bussi and D. Joyce, \textit{A 'Darboux theorem' for derived schemes with shifted symplectic structure}, arXiv:1305.6302, 2013.
\bibitem{bryant}R.L. Bryant, \textit{Submanifolds and Special Structures on the Octonians}, J. Diff. Geom. 17 (1982) no. 2, 185-232.
\bibitem{cao} Y. Cao, \textit{Donaldson-Thomas theory for Calabi-Yau four-folds}, MPhil thesis, arXiv:1309.4230, 2013.
\bibitem{caoleung}Y. Cao and N. C. Leung, \textit{Donaldson-Thomas theory for Calabi-Yau 4-folds}, arXiv:1407.7659, 2014.
\bibitem{caoleung3} Y. Cao and N. C. Leung, \textit{Orientability for gauge theories on Calabi-Yau manifolds}, arXiv:1502.01141, 2015.
\bibitem{caoleung2} Y. Cao and N. C. Leung, \textit{Relative Donaldson-Thomas theory for Calabi-Yau 4-folds}, arXiv:1502.04417, 2015.
\bibitem{d} S. K. Donaldson, \textit{An application of gauge theory to 4-dimensional topology}, J. Diff. Geom. 18 (1983), 279-315.
\bibitem{d1}S. K. Donaldson, \textit{The orientation of Yang-Mills moduli spaces and 4-manifold topology}, J. Diff. Geom. 26 (1987), no. 3, 397-428.
\bibitem{d10}S. K. Donaldson, \textit{Moment maps and diffeomorphisms}, Asian Journal of Mathematics, Vol. 3, Pages 1-16, March, 1999.
\bibitem{dk} S. K. Donaldson and P. B. Kronheimer, \textit{The geometry of four manifolds}, Oxford University Press, (1990).
\bibitem{dt}S. K. Donaldson and R. P. Thomas, \textit{Gauge theory in higher dimensions}, in The Geometric Universe (Oxford, 1996), Oxford Univ. Press, Oxford, 1998, 31-47.
\bibitem{fukaya}K. Fukaya, \textit{Cyclic symmetry and adic convergence in Lagrangian Floer theory}, Kyoto J. Math., 50, (2010), 521-590.
\bibitem{fukaya1}K. Fukaya, \textit{Counting pseudo-holomorphic discs in Calabi-Yau 3 fold}, arXiv:0908.0148, 2009.
\bibitem{fooo} K. Fukaya, Y.-G. Oh, H. Ohta, and K. Ono, \textit{Lagrangian Intersection Floer Theory: Anomaly and Obstruction. part I},
AMS/IP Studies in Advanced Mathematics, International Press.
\bibitem{harveylawson}R. Harvey and H. B. Lawson Jr., \textit{Calibrated geometries}, Acta Math. 148 (1982), 47-157.
\bibitem{hitchin1} N. Hitchin, \textit{The moduli space of special Lagrangian submanifolds}, Annali Scuola Sup. Norm. Pisa Sci. Fis. Mat. 25 (1997) 503-515.
\bibitem{hitchin2} N. Hitchin, \textit{Lectures on Special Lagrangian Submanifolds}, arXiv:math/9907034, 1999.
\bibitem{horiiqbalvafa}K. Hori, A. Iqbal and C. Vafa, \textit{D-Branes And Mirror Symmetry}, arXiv:hep-th/0005247, 2000.
\bibitem{js} D. Joyce and Y. N. Song, \textit{A theory of generalized Donaldson-Thomas invariants}, Memoirs of the AMS, arXiv:0810.5645, 2010.
\bibitem{kkp}L. Katzarkov, M. Kontsevich and T. Pantev, \textit{Bogomolov-Tian-Todorov theorems for Landau-Ginzburg models}, arXiv:1409.5996, 2014.
\bibitem{kelley} B. Keller, \textit{A-infinity algebras, modules and functor categories}, arXiv:math/0510508v3, 2006.
\bibitem{kont}M. Kontsevich, \textit{Homological Algebra of Mirror Symmetry}, ICM, Zurich 1994, arXiv:alg-geom/9411018.
\bibitem{kont2} M. Kontsevich, \textit{Lectures at ENS Paris}, Spring 1998. Set of notes taken by J. Bellaiche, J.-F. Dat, I. Marin,
G. Racinet, H. Randriambololona.
\bibitem{ks1} M. Kontsevich and Y. Soibelman, \textit{Notes on A-infinity algebras, A-infinity categories and non-commutative geometry. I}, arXiv:math/0606241v2, 2006.
\bibitem{ks} M. Kontsevich and Y. Soibelman, \textit{Stability structures, motivic Donaldson-Thomas invariants and cluster transformations},
arXiv:0811.2435, 2008.
\bibitem{ll2}J.-H. Lee, N. C. Leung, \textit{Geometric structures on G2 and Spin(7)-manifolds}, Adv. Theor. Math. Phys. 13 (2009), no. 1, 1-31.
\bibitem{leung1} N. C. Leung, \textit{Lagrangian submanifolds in Hyperkahler manifolds, Legendre transformation}, J. Diff. Geom.
Vol. 61, no. 1 (2002), 107-145.
\bibitem{leung4} N. C. Leung, \textit{Riemannian Geometry Over Different Normed Division Algebras}, J. Diff. Geom.
Vol. 61, no. 2 (2002), 289-333.
\bibitem{leungvafa} N. C. Leung and C. Vafa, \textit{Branes and Toric Geometry}, arXiv:hep-th/9711013, 1997.
\bibitem{lyz}N. C. Leung, S. T. Yau, and E. Zaslow, \textit{From special Lagrangian to Hermitian-Yang-Mills via Fourier-Mukai transform},
 Adv. Theor. Math. Phys. 4 (2000), no. 6, 1319-1341.
\bibitem{lotay}J. D. Lotay and T. Pacini, \textit{Coupled flows, convexity and calibrations: Lagrangian and totally real geometry}, arXiv:1404.4227, 2014.
\bibitem{mclean}R. C. McLean, \textit{Deformations of Calibrated Submanifolds}, Commun. Analy. Geom. 6 (1998) 705-747.
\bibitem{poli} A. Polishchuk, \textit{Homological mirror symmetry with higher products}. Proceedings of the Winter School on Mirror Symmetry, Vector Bundles and Lagrangian Submanifolds, 247-259. AMS and International, 1999.
\bibitem{segal} E. Segal, \textit{The A-infinity deformation theory of a point and the derived categories of local Calabi-Yaus}, J. Algebra 320(8), 3232-3268 (2008).
\bibitem{seidel}P. Seidel, \textit{Fukaya categories and Picard-Lefschetz theory}, European Math. Soc., 2008.
\bibitem{seidel2}P. Seidel, \textit{Suspending Lefschetz fibrations, with an application to local mirror symmetry}, Commun. Math. Phys. 297, 515-528 (2010).
\bibitem{st} P. Seidel and R.P. Thomas, \textit{Braid group actions on derived categories of coherent sheaves}, Duke Math. J. 108 (2001), 37-108.
\bibitem{smoczyk} K. Smoczyk, \textit{A canonical way to deform a Lagrangian submanifold}, arXiv:dg-ga/9605005.
\bibitem{stian} J. Streets and G. Tian, \textit{Symplectic curvature flow}, J. Reine Angew. Math. 696 (2014), 143-185.
\bibitem{syz} A. Strominger, S. T. Yau, and E. Zaslow, \textit{Mirror symmetry is T-duality}, Nuclear Phys. B 479 (1996), no. 1-2, 243-259.
\bibitem{th} R. P. Thomas, \textit{A holomorphic Casson invariant for Calabi-Yau 3-folds, and bundles on K3 fibrations},
J. Diff. Geom. 54 (2000), 367-438.
\bibitem{thomas}R. P. Thomas, \textit{Moment maps, monodromy and mirror manifolds}, In "Symplectic geometry and mirror symmetry" Proceedings of a conference at KIAS, World Scientific, 2001, 467-498.
\bibitem{thomasyau}R. P. Thomas and S. T. Yau, \textit{Special Lagrangians, stable bundles and mean curvature flow}, Commun. Analy. Geom. 10, 1075-1113, 2002.
\bibitem{tu} J. Tu, \textit{Homotopy L-infinity spaces}, arXiv:1411.5115, 2014.
\bibitem{UY} K. Uhlenbeck and S. T. Yau, \textit{On the existence of Hermitian-Yang-Mills connections in stable vector bundles}, Comm. Pure Appl. Math. 39 (1986) 257-293.
\bibitem{yau} S. T. Yau, \textit{On the Ricci curvature of a compact K\"{a}hler manifold and the complex
Monge-Amp\`{e}re equation I}, Comm. Pure Appl. Math. 31 (1978) 339-411.
\end{thebibliography}
\end{document}